\newcommand{\field}[1]{\mathbb{#1}}
\newcommand{\R}{\field{R}}
\newcommand{\Z}{\field{Z}}
\newcommand{\B}{\mathcal{B}}
\newcommand{\tr}{\mathrm{tr}}
\newcommand{\Int}{\mathrm{Int}}
\newcommand{\E}{\field{E}}
	\newtheorem{theorem}{Theorem}[section]
	\newtheorem{lemma}{Lemma}[section]
\theoremstyle{definition}
	\newtheorem{remark}{Remark}[section]
	\newtheorem{assumption}{Assumption}[section]
\newcommand{\plim}{\xrightarrow{{\cal P}}}
\newcommand{\dlim}{\xrightarrow{{\cal L}}}
\begin{document}

\title{Change point detection in autoregressive models  with no moment assumptions }

\author{
{\small Fumiya Akashi} \\
{\small Waseda University} \\
{\small Department of Applied Mathematics} \\
{\small 169-8555, Tokyo, Japan} \\
\and
{\small Holger Dette } \\
{\small Ruhr-Universit\"at Bochum } \\
{\small Fakult\"at f\"ur Mathematik } \\
{\small 44780 Bochum, Germany } \\
\and
{\small Yan Liu} \\
{\small Waseda University} \\
{\small Department of Applied Mathematics} \\
{\small 169-8555, Tokyo, Japan} \\
}

\date{}

\maketitle
\begin{abstract}
In this paper we consider the problem of detecting a  change   in the parameters of an autoregressive process, where the moments of the innovation process do not necessarily exist.
An empirical likelihood ratio  test for the existence of a change point is proposed and its asymptotic properties are studied.
In contrast to other work
on change point tests using empirical likelihood,
we do not assume knowledge
of the location of the change point.
In particular, we prove
that the maximizer of the empirical likelihood is a consistent estimator for  the parameters of the autoregressive model in the case of no change point and
derive the limiting distribution of the corresponding test statistic under  the null hypothesis. We also establish consistency of the new test. A nice feature of the method consists in the fact  that the resulting test is asymptotically
distribution free and does not require an estimate of the long run variance.
The asymptotic properties of the test are investigated by means of a small simulation study, which demonstrates good finite sample properties of the proposed method.
 \end{abstract}

\parindent  0cm
\noindent
Keywords and Phrases: Empirical likelihood, change point analysis,  infinite variance, autoregressive processes

\noindent
AMS Subject Classification: 62M10, 62G10, 62G35

\section{Introduction}
The problem of detecting  structural breaks in time series   has been studied for a long time.
Since the seminal work of  \cite{page1954, page1955}, who proposed a sequential scheme
for identifying  changes in the mean of a sequence of independent random variables, numerous authors have worked on this problem. A large part of the literature concentrates
on CUSUM tests, which are nonparametric by design [see \cite{auehor2013} for a recent review and some  important references].
Other authors make distributional assumptions to construct tests for structural breaks. For example,  \cite{gombay1990}  suggested
 a likelihood ratio  procedure to test for a change in the mean and  extensions of this
  method can be found in  the monograph of  \cite{csorgo1997} and the reference therein.
  An important problem in this context is the detection of changes in the parameters of an autoregressive process
  and we refer to the work of \cite{andrews1993}, \cite{bai1993,bai1994}, \cite{davhuayao1995}, \cite{lee2003} and \cite{berkesetal2011}  among others who proposed CUSUM-type and  likelihood ratio tests.

In practice, however, the distribution of random variables is rarely known and its misspecification may result in an
invalid analysis using  likelihood ratio  methods.  One seminal method to treat the likelihood ratio empirically
has been  investigated by \cite{O:1988},  \cite{qin1994empirical} in a general context and extended by \cite{chuangchan2002}
to estimate and test parameters in an autoregressive model.  In change point analysis the empirical likelihood approach
can be viewed as  a compromise between the completely  parametric  likelihood ratio  and  nonparametric CUSUM
method. \cite{baragona2013empirical} used this concept to
construct a test for change-points and showed that in
  the case where the location of the break points is   known,
the limiting distribution of the corresponding test statistic is a chi-square distribution.
 \cite{ciupsall2014}  considered the change point problem
 in a non-linear model with independent data without assuming knowledge of its location
and derived an extreme value distribution as limit distribution of the empirical likelihood ratio test statistic.
These findings  are similar in spirit to the meanwhile classical  results  in  \cite{csorgo1997}, who considered the likelihood ratio test.

The purpose of the present  paper is to  investigate an empirical likelihood test for a  change in  the parameters of an autoregressive process
with infinite variance (more precisely we do not assume the existence of any moments).
Our work is motivated by the fact
that in many  fields, such as electrical engineering, hydrology, finance and physical systems, one often observes
 ``heavy-tailed'' data [see  \cite{nolan2015stable} or  \cite{samoradnitsky1994stable} among many others].
To deal  with such  data,  many authors  have  developed $L_1$-based methods.
For example, \cite{chen2008analysis} constructed  a robust test  for a linear hypothesis of the parameters
based on  least absolute deviation.  \cite{ling2005self} and
\cite{pan2007weighted}
proposed self-weighted least absolute deviation-based estimators for (parametric)   time series models with an infinite variance innovation process and
show the asymptotic normality of the estimators.  However, the limit distribution of the $L_1$-based statistics  usually contains the
unknown  probability density  of the innovation process, which is difficult to estimate.
For example, \cite{ling2005self} and \cite{pan2007weighted}
used kernel density estimators for this purpose, but the choice of  the corresponding  bandwidth is not clear and often depends on users.

To circumvent problems of this type in the context of change point analysis, we  combine in  this paper quantile regression  and empirical likelihood
methods. As a remarkable feature, the asymptotic distribution of the
proposed test statistic does not involve unknown quantities of the model even if we consider autoregressive models with an infinite variance
in the innovation process. We would also like to emphasize that the nonparametric CUSUM tests proposed by \cite{bai1993,bai1994} for detecting structural breaks in the parameters of an autoregressive process assume the existence of the variance of the innovations. However, an alternative to the method proposed here  are
CUSUM  tests  based on quantile regression,   which has been  recently considered  by  \cite{qu2008},  \cite{suxiao2008}   and  \cite{zhouwangtang2015}
among others.

The remaining part  of this paper is organized as follows.
In Section \ref{sec2}, we introduce the model, the testing problem
and the so-called self-weighted empirical likelihood ratio test statistic.
Our main results are given in Section \ref{sec3},  where we derive the
limit distribution of the proposed test statistic and prove consistency. The finite sample properties 
 of the proposed test  are investigated   in Section \ref{sec4} by means of a simulation study. We also
 compare  the test proposed in this paper with the CUSUM test using quantile regression [see \cite{qu2008}].
While the empirical likelihood based test suggested here
is competitive with the CUSUM test using quantile regression when the innovation process is Gaussian,
it  performs remarkably better than the   CUSUM test of \cite{qu2008}
if  the innovation process  has heavy tails.
Moreover, the new test is robust with respect  non-stationarity even when the process is nearly a unit root process.
Finally,
 rigorous proofs of  the results relegated to   Section \ref{sec5}.

\section{Change point tests using empirical likelihood } \label{sec2}

Throughout this paper the following  notations and symbols are  used.
The set of all integers and real numbers are denoted as $\Z$ and $\R$, respectively.
For any sequence of random vectors $\{A_n:n\geq1\}$ we denote by
$$A_n\plim A ~\mbox{  and } ~A_n\dlim A
$$
 convergence in probability and law to a random vector $A$, respectively.
The transpose of a matrix $M$ is denoted by $M'$, and
$\|M\| = \{ \tr(M' M) \} ^{1/2}$ is the Frobenius norm.
We denote the $i$-dimensional zero vector, the $j\times k$ zero matrix and the $l\times l$ identity matrix by
$0_i$, $O_{j\times k}$ and $I_{l\times l}$,
respectively.

Consider  the autoregressive model  of order $p$ (AR($p$) model)  defined by
\begin{align} \label{eq:model}
y_t = X_{t-1}'\beta + e_t,
\end{align}
where $X_{t-1} = (y_{t-1},\ldots,y_{t-p})'$ and $\beta \in \R^p$
and assume that the innovation process $\{e_t:t\in\Z\}$
is a sequence of independent and identically distributed
(i.i.d.) random variables  with  vanishing median.
Let $\{y_{1-p},\ldots,y_n\}$ be an observed stretch from the model \eqref{eq:model} for
$\beta = \beta_0$, where $\beta_0 = (\beta_1,\ldots,\beta_p)'$ denotes the ``true'' parameter.

This paper focuses on a posteriori type change point problem for the parameters in the  AR($p$) process \eqref{eq:model}. More precisely,
we consider  the model
\begin{align*}
y_t = \left\{
\begin{array}{ll}
X_{t-1}'\theta_1 + e_t & (1\leq t \leq k^*)\\
X_{t-1}'\theta_2 + e_t & (k^*+1\leq t \leq n)
\end{array}
\right.
\end{align*}
for some vector $\theta_1,\theta_2\in\R^p$, where $k^*\in\{1,\ldots,n\}$ is the  unknown time point
of the change. The testing problem for a change point in the autoregressive process can then be formulated by the following hypotheses:
\begin{align}\label{eq:01}
H_0: \theta_1 = \theta_2 = \beta_0 \quad  \text{against} \quad
H_1: \theta_1\neq \theta_2.
\end{align}
Note that we neither assume knowledge of the change point $k^*$ (if the null hypothesis is not true) nor
of  the true value $\beta_0\in\R^p$ (if the null hypothesis holds). \\

For the testing problem \eqref{eq:01}, we construct an empirical likelihood ratio (ELR) test. To be precise,
 let ${\mathbb I}$ denote the indicator function.
 As the median of $e_t$ is zero, the  moment condition
\begin{align}
\E\Big[\Big\{
\frac{1}{2}-{\mathbb I} ( y_t - X_{t-1}'\beta_0 \leq 0 )
\Big\}a^*(X_{t-1})
\Big] = 0_{m} \label{eq:03}
\end{align}
holds under the null hypothesis $H_0$ in \eqref{eq:01},
where $a^*(X_{t-1})$ is any $m$-dimensional measurable function of $X_{t-1}$
independent of $e_t$.
Motivated by the moment conditions \eqref{eq:03},
 we first introduce the self-weighted moment function
\begin{align}
g(\mathscr{Y}_{t}^{p} , \beta) := \Big\{
\frac{1}{2}-{\mathbb I}\left( y_t - X_{t-1}'\beta \leq 0 \right)
\Big\}
a^*(X_{t-1}) \quad(t=1,\ldots,n),\notag
\end{align}
where  $\mathscr{Y}_{t}^{p} = (y_t, \dots, y_{t - p})$ and
$a^*(X_{t-1}) = w_{t-1}a(X_{t-1})$,
$a(x)=(x',\varphi(x)')'$ is an $m=(p+q)$-dimensional function,
$\varphi$ a $q$-dimensional function,
$w_{t-1} = w(y_{t-1},\ldots,y_{t-p})$ a self-weight and $w$  some positive weight function.
We can choose the weight function $w$ and $\varphi$ arbitrarily
provided that Assumption \ref{ass:2} in Section \ref{sec3} holds. In particular,
we can use $a(x)=x$, which corresponds to the case $q=0$ (see also Section \ref{sec4}).

Note that under the null hypothesis $H_0$,  we have
that $\E[g(\mathscr{Y}_{t}^{p}, \beta_0)] = 0_m$
 for all $t=1,\ldots,n$.
Let $r_{n, k}$ be $(v_{1},\ldots,v_k,v_{k+1},\ldots,v_n)'$ be a vector in the unit cube $[0,1]^n$, then the
  empirical likelihood (EL), for $\beta = \theta_1$ before the change point $k \in \{ 1,\ldots,n \}$
and $\beta = \theta_2$ after the change point, is defined by
\begin{align*}
L_{n, k}(\theta_1, \theta_2) := \sup\Big\{ \Big(\prod_{i=1}^k v_i\Big)\Big(\prod_{j=k+1}^n  v_j\Big)
: r_{n, k} \in {\cal P}_{n, k}\cap{\cal M}_{n, k}(\theta_1,\theta_2)
\Big\},
\end{align*}
where ${\cal P}_{n, k}$ and
${\cal M}_{n, k}(\theta_1,\theta_2)$ are subsets of the cube   $[0,1]^{n}$ defined as
\[
{\cal P}_{n, k}
:= \Big\{r_{n, k}\in[0,1]^{n}:
\sum_{i=1}^k v_i = \sum_{j=k+1}^n v_j=1
\Big\}\notag\\
\]
and
\begin{align*}
{\cal M}_{n, k}(\theta_1,\theta_2)
:= \Big\{r_{n, k}\in[0,1]^{n}:\sum_{i=1}^k
v_ig(\mathscr{Y}_{i}^{p} , \theta_1)
 = \sum_{j=k+1}^n v_j g(\mathscr{Y}_{j}^{p}, \theta_2) = 0_m
\Big\}.
\end{align*}
Note that the unconstrained maximum EL is represented as
\begin{align*}
L_{n, k, E}:=\sup\Big\{\prod^n_{i=1} v_i: r_{n, k} \in {\cal P}_{n, k}
\Big\} = k^{-k}(n-k)^{-(n-k)},
\end{align*}
and hence, the logarithm of the  empirical likelihood ratio (ELR) statistic is given by
\begin{align}
l_{n, k}(\theta_1,\theta_2)
&:= -\log\frac{L_{n, k}(\theta_1, \theta_2)}{L_{n, k, E}}\notag\\
&=-\log\sup\Big\{ \Big(\prod_{i=1}^k k v_i\Big)\Big(\prod_{j=k+1}^n (n-k)  v_j\Big)
: r_{n, k} \in {\cal P}_{n, k}\cap{\cal M}_{n, k}(\theta_1, \theta_2)\Big\} \nonumber\\
& = \Big[
	\sum_{i=1}^k\log\big\{1 - \lambda'g(\mathscr{Y}_{i}^{p}, \theta_1) \big\}
	+\sum_{j=k+1}^n\log\big\{1 - \eta'g(\mathscr{Y}_{j}^{p}, \theta_2)\big\}
	\Big],
\label{eq:08}
\end{align}
where \eqref{eq:08} is obtained by the Lagrange multiplier method and the multipliers $\lambda$,
 $\eta\in\R^m$ satisfy
\begin{align*}
 \sum_{i=1}^k \frac{g(\mathscr{Y}_{i}^{p}, {\theta_1})}{1-\lambda'g(\mathscr{Y}_{i}^{p}, \theta_1)}=
 \sum_{j=k+1}^n \frac{g(\mathscr{Y}_{j}^{p}, {\theta_2})}{1-\eta'g(\mathscr{Y}_{j}^{p}, \theta_2)}
 = 0_{m}.
\end{align*}

We finally  define the test statistic for   the change point problem \eqref{eq:01}.
Since the maximum ELR  under $H_0$ is given by
\begin{equation*} 
P_{n, k} := \sup_{\beta\in {\cal B}}\{-l_{n, k}(\beta,\beta)\},
\end{equation*} 
one may define the ELR test statistic by
\begin{equation} \label{tn}
T_n := 2 \max_{\lfloor r_1 n \rfloor \leq k\leq \lfloor r_2 n \rfloor }P_{n, k},
\end{equation}
where $ 0 < r_1 < r_2 < 1$ for fixed constants. Note that we do not consider the  maximum of $\{ P_{n, k}~|~k=1, \ldots , n\}$ as
 $P_{n, k}$ can not be estimated accurately for small and large values of $k$ (see Theorem \ref{thm:1} in Section \ref{sec3}
 for more details).
The asymptotic properties of  a weighted version of  this statistic  are investigated in the following section.
\begin{remark}
The approach presented here can be  naturally extended to the general $\tau$-quantile regression
models. To be precise, suppose that
$$Q_y(\tau \mid X_{t-1}) = \inf\{y: P(y_t<y \mid X_{t-1})\geq\tau\}$$
denotes the $\tau$th-quantile
of $y_t$ conditional on $X_{t-1}$ and assume that
$Q_y(\tau \mid X_{t-1}) = \beta(\tau)'X_{t-1}$.
The moment condition
\[
\E[g^{(\tau)}(\mathscr{Y}^{p}_t, \beta_0(\tau))]=0_m
\]
still holds under the null hypothesis $H_0$, if we define
\begin{align*}
g^{(\tau)}(\mathscr{Y}^{p}_t, \beta(\tau)) := \psi_\tau(y_t-\beta(\tau)'X_{t-1})
a^*(X_{t-1})
\end{align*}
and
$\psi_\tau(u) := \left\{\tau-{\mathbb I}(u\leq 0)\right\}$.
\end{remark}

\begin{remark}
The method can also be extended to develop change point analysis based on the generalized
empirical likelihood (GEL).
A GEL test  statistic for the change point problem \eqref{eq:01} can be defined by
\[
l^\rho_{n, k}(\theta_1,\theta_2) = 2
\Big[
\sup_{\lambda\in \R^m}\sum_{i=1}^k\rho\left\{\lambda'g(\mathscr{Y}_{i}^{p}, \theta_1) \right\}
+\sup_{\eta\in\R^{m}}\sum_{j=k+1}^n\rho\left\{\eta'g(\mathscr{Y}_{j}^{p}, \theta_2) \right\}
\Big ],
\]
where $\rho$ is a real-valued, concave, twice differentiable function defined on an open interval of the real line that contains 
the point $0$ with 
$\rho'(0) = \rho''(0) = 1$.
Typical examples for the choice of $\rho$ are given by $\rho(\nu) = -\log(1-\nu)$ and
\begin{equation} \label{eq:2.13}
\rho(\nu) = \frac{(1 + c \nu)^{(c+1)/c} - 1}{c + 1}.
\end{equation}
Using Lagrangian multipliers, it is easy to see that the choice $\rho (\nu)=- \log (1- \nu)$ yields the  empirical likelihood method discussed so far.
The class associated with \eqref{eq:2.13} is called the Cressie-Read family [see \cite{cressie1984multinomial}].
\end{remark}

\section{Main results}
\label{sec3}
In this section we state our main results.
Throughout this paper, let $F$ and $f$ denote the distribution function and
the probability density function of $e_t$, respectively.
We impose the following assumptions.

\begin{assumption}\label{ass:1}\mbox{}
\begin{enumerate}[label=(\roman*)]
\item $\beta_0\in\Int({\cal B})$, where the parameter space ${\cal B}$ is a compact set in $\R^p$ with non-empty interior.
\item $1-\beta_{1}z - \cdots-\beta_{p}z^p\neq 0$ for $|z|\leq 1$ and $\beta\in{\cal B}$.
\item The median of $e_t$ is zero.
\item The distribution function $F$ of $e_t$ is continuous and differentiable at the point $0$ with positive derivative $F^\prime (0)=f(0)$.
\end{enumerate}
\end{assumption}
\begin{assumption}\label{ass:2}
$\E[(w_{t-1} + w_{t-1}^2)(\|a(X_{t-1})\|^2 + \|a(X_{t-1})\|^3)]<\infty$.
\end{assumption}

\begin{assumption}\label{ass:3}
The matrix $\E[g(\mathscr{Y}^{p}_t, \beta_0)g(\mathscr{Y}^{p}_t, \beta_0)']$ is positive definite.
\end{assumption}
\noindent

\begin{assumption}\label{ass:4}\mbox{}
\begin{enumerate}[label=(\roman*)]
\item There exists a constant $\gamma>2$ such that
$\E[\|a^*(X_{t-1})\|^\gamma]<\infty$.
\item
Let $v_t:=\textrm{sign}(e_t)a^*(X_{t-1})$. Then the sequence $\{v_t:t\in\Z\}$ is strong mixing
with mixing coefficients $\alpha_l$ that satisfy
$\sum^{\infty}_{l=1}\alpha_l^{1-2/\gamma}<\infty$.
\end{enumerate}
\end{assumption}

\medskip

The {\it  maximum EL estimator}  $\hat\beta_{n,k}$ is defined by
\[
-l_{n, k}(\hat\beta_{n,k},\hat\beta_{n,k}) = \sup_{\beta\in{\cal B}}\{-l_{n, k}(\beta,\beta)\} ~, 
\]
 and the consistency with corresponding rate of convergence of this statistic
are  given in the following theorem.
\begin{theorem}\label{thm:1}
Suppose that Assumptions \ref{ass:1}-\ref{ass:4} hold and define  $k^*: = r n$ for some $r \in (0, 1)$.
Then, under the null hypothesis $H_0$, we have, as $n\to\infty$,
\[
\hat\beta_{n,k^*} - \beta_0 = O_p\left(n^{-1/2}\right).
\]
\end{theorem}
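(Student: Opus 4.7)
The plan is a Qin--Lawless style quadratic expansion of the profile ELR, combined with a uniform Bahadur-type linearization of the empirical moment $\bar g_j(\beta)$ that absorbs the non-smoothness of the indicator in $g$. Since $k^*=rn$ with $r\in(0,1)$, both blocks of sizes $k^*$ and $n-k^*$ have effective sample size proportional to $n$, so standard EL machinery applies to each block separately and the two contributions simply add. Write $\bar g_1(\beta)=(k^*)^{-1}\sum_{i=1}^{k^*}g(\mathscr Y_i^p,\beta)$ and $S_1(\beta)=(k^*)^{-1}\sum_{i=1}^{k^*}g(\mathscr Y_i^p,\beta)g(\mathscr Y_i^p,\beta)'$, with analogous $\bar g_2,S_2$ over the second block. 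Using Assumption \ref{ass:2}, the $\alpha$-mixing structure of $\{v_t\}$ from Assumption \ref{ass:4} and the usual Owen / Qin--Lawless arguments, the Lagrange multipliers in \eqref{eq:08} admit
\[
\lambda(\beta)=S_1(\beta)^{-1}\bar g_1(\beta)+o_p(n^{-1/2}),\qquad \eta(\beta)=S_2(\beta)^{-1}\bar g_2(\beta)+o_p(n^{-1/2})
\]
uniformly on shrinking neighbourhoods of $\beta_0$, and $S_j(\beta)\plim\Sigma:=\E[g(\mathscr Y_t^p,\beta_0)g(\mathscr Y_t^p,\beta_0)']$, with $\Sigma$ positive definite by Assumption \ref{ass:3}. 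Taylor expanding $\log(1-x)$ in \eqref{eq:08} then yields
\[
-l_{n,k^*}(\beta,\beta)=\frac{k^*}{2}\bar g_1(\beta)'\Sigma^{-1}\bar g_1(\beta)+\frac{n-k^*}{2}\bar g_2(\beta)'\Sigma^{-1}\bar g_2(\beta)+o_p(1).
\]

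The essential step is a uniform linearization of $\bar g_j(\beta)$ in $\beta$. Under $H_0$ we have $g(\mathscr Y_t^p,\beta)=\{\tfrac12 - I(e_t\le X_{t-1}'(\beta-\beta_0))\}a^*(X_{t-1})$ with $e_t$ independent of $X_{t-1}$, so the population gradient at $\beta_0$ is the $m\times p$ matrix
\[
D:=\pdif{\beta}\E[g(\mathscr Y_t^p,\beta)]\Big|_{\beta=\beta_0}=-f(0)\,\E\bigl[a^*(X_{t-1})X_{t-1}'\bigr],
\]
which has full column rank $p$ because its top $p\times p$ block $\E[w_{t-1}X_{t-1}X_{t-1}']$ is positive definite by stationarity (Assumption \ref{ass:1}(ii)) and Assumption \ref{ass:2}. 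Using maximal inequalities for $\alpha$-mixing empirical processes indexed by the half-space class $\{(e,x)\mapsto I(e\le x'u)a^*(x):u\in\R^p\}$, together with the envelope moment $\E\|a^*(X_{t-1})\|^\gamma<\infty$ of Assumption \ref{ass:4}(i), one obtains the stochastic-equicontinuity estimate
\[
\sup_{\|\beta-\beta_0\|\le Mn^{-1/2}}\bigl\|\bar g_j(\beta)-\bar g_j(\beta_0)-D(\beta-\beta_0)\bigr\|=o_p(n^{-1/2})
\]
for every fixed $M>0$ and $j=1,2$, together with $\sqrt n\,\bar g_j(\beta_0)=O_p(1)$ via a CLT for the mixing sequence $\{v_t\}$.

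Substituting the Bahadur expansion into the quadratic approximation gives, uniformly on $\{\|\beta-\beta_0\|\le Mn^{-1/2}\}$,
\[
-l_{n,k^*}(\beta,\beta)=\frac{n}{2}(\beta-\beta_0)'D'\Sigma^{-1}D(\beta-\beta_0)+\sqrt n\,(\beta-\beta_0)'D'\Sigma^{-1}Z_n+O_p(1),
\]
where $Z_n=O_p(1)$ and $D'\Sigma^{-1}D$ is positive definite. Because $\hat\beta_{n,k^*}$ maximizes $-l_{n,k^*}(\beta,\beta)$ over ${\cal B}$, $-l_{n,k^*}(\hat\beta_{n,k^*},\hat\beta_{n,k^*})\ge -l_{n,k^*}(\beta_0,\beta_0)=O_p(1)$. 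A standard argmax-tightness argument then excludes $\|\hat\beta_{n,k^*}-\beta_0\|$ of larger order than $n^{-1/2}$: on $\{\|\hat\beta_{n,k^*}-\beta_0\|\ge Mn^{-1/2}\}$ the positive quadratic term dominates and forces $-l_{n,k^*}(\hat\beta_{n,k^*},\hat\beta_{n,k^*})$ to exceed any prescribed bound, contradicting $O_p(1)$. A preliminary global consistency step (showing that the limiting population objective is uniquely maximized at $\beta_0$ and that the empirical objective converges uniformly on ${\cal B}$) reduces the analysis to such shrinking neighbourhoods from the start.

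The main obstacle is precisely this uniform Bahadur-type expansion: $g$ is discontinuous in $\beta$, the innovations $e_t$ may be heavy-tailed and the observations are only $\alpha$-mixing, so off-the-shelf empirical process results for i.i.d.\ data are not directly applicable. The compensating ingredient is the $\gamma$-th moment bound on $a^*(X_{t-1})$ in Assumption \ref{ass:4}(i), which is exactly what is needed to invoke chaining / maximal inequalities for indicator classes under the given mixing condition and to control the envelope term carrying all the dependence on the heavy-tailed process.
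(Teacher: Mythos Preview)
Your strategy is viable but takes a different route from the paper's, and there is a sign slip that makes the argmax step, as written, run backwards.

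The paper does not expand $-l_{n,k^*}(\beta,\beta)$ as a quadratic in $\beta$. Instead it argues in the Newey--Smith / GEL style: a saddle-point inequality (their Lemma~5.3) shows directly that $\hat g^1_{k^*}(\hat\beta_{n,k^*})=O_p({k^*}^{-1/2})$ and $\hat g^2_{n,k^*}(\hat\beta_{n,k^*})=O_p((n-k^*)^{-1/2})$, \emph{without} any prior localization of $\hat\beta_{n,k^*}$. Consistency then follows from identification ($g(\beta)$ has a unique zero at $\beta_0$), and the $n^{-1/2}$ rate is obtained by combining $\hat g(\hat\beta_{n,k^*})=O_p(n^{-1/2})$, $\hat g(\beta_0)=O_p(n^{-1/2})$, the stochastic-equicontinuity bound $\|\hat g(\beta)-\hat g(\beta_0)-g(\beta)\|\le(1+\sqrt n\|\beta-\beta_0\|)\,o_p(n^{-1/2})$ and the lower bound $\|g(\beta)\|\ge C\|\beta-\beta_0\|$. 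Your Bahadur linearization and the paper's stochastic-equicontinuity bound are essentially the same technical ingredient, so the hard analytical work is shared; what differs is the packaging. The paper's packaging has the advantage that consistency comes for free from Lemma~5.3 and identification, whereas your ``preliminary global consistency step'' via uniform convergence of the profiled ELR is delicate (for $\beta$ bounded away from $\beta_0$ the EL objective is $-\infty$ in the limit, so uniform convergence on ${\cal B}$ is not the right formulation).

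On the sign: with the paper's parametrization $v_i\propto(1-\lambda'g_i)^{-1}$ one has $\lambda(\beta)=-S_1(\beta)^{-1}\bar g_1(\beta)+o_p(n^{-1/2})$ and
\[
l_{n,k^*}(\beta,\beta)=\tfrac{k^*}{2}\,\bar g_1(\beta)'\Sigma^{-1}\bar g_1(\beta)+\tfrac{n-k^*}{2}\,\bar g_2(\beta)'\Sigma^{-1}\bar g_2(\beta)+o_p(1)\ \ge\ 0,
\]
so your displayed expansion of $-l_{n,k^*}(\beta,\beta)$ should carry a \emph{negative} quadratic. As you wrote it, the quadratic is positive and $\hat\beta_{n,k^*}$ is a \emph{maximizer} of $-l_{n,k^*}$, which would push $\hat\beta_{n,k^*}$ away from $\beta_0$; the sentence ``forces $-l_{n,k^*}$ to exceed any prescribed bound, contradicting $O_p(1)$'' is then really using the hidden upper bound $-l_{n,k^*}\le 0$, not the maximization inequality you invoke. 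With the correct sign the argument is the intended one: for $\|\beta-\beta_0\|\ge Mn^{-1/2}$ the negative quadratic drives $-l_{n,k^*}(\beta,\beta)$ below any level, contradicting $-l_{n,k^*}(\hat\beta_{n,k^*},\hat\beta_{n,k^*})\ge -l_{n,k^*}(\beta_0,\beta_0)=O_p(1)$.
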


As seen from Theorem \ref{thm:1}, $T_n$ is not accurate for small $k$ and $n - k$  as the result
does not hold if $k/n= o(1)$ or $(n-k)/n= o(1)$.
In addition, the ELR statistic is not computable for small $k$ and $n-k$.
For this reason, we consider in the following discussion
the trimmed and weighted-version of EL ratio test statistic, defined by
\begin{align}
\tilde T_n := 2\max_{k_{1n}\leq k\leq k_{2n}}
h\Bigl(\frac{k}{n}\Bigr)P_{n, k}, \label{stat}
\end{align}
where $h$ is a given weight function,
$k_{1n}: = r_1 n$, $k_{2n}: = r_2 n$
and $0 < r_1 < r_2 < 1$.
If $\tilde T_n$ takes a significant large value, we have enough reason to reject the null hypothesis $H_0$ of no change point.
We also need a further assumption to control a remainder terms
in the stochastic expansion of $\tilde T_n$.

\begin{assumption}\label{ass:5}
$\sup_{0<r<1} h(r)^2<\infty$.
\end{assumption}
\noindent
With this additional assumption the limit distribution of the test statistic \eqref{stat} can be derived in the following theorem.
\begin{theorem}\label{thm:2}
Suppose that Assumptions \ref{ass:1}-\ref{ass:5} hold.
Then, under the null hypothesis $H_0$ of no change point
\begin{equation}  \label{weak}
\tilde T_n \dlim
T :=
\sup_{r_1 \leq r \leq r_2}
\left\{r^{-1}(1-r)^{-1} h(r) \big\|B(r)- r  B(1)\big\|^2
+
h(r) B(1)'Q  B(1)\right\}
\end{equation}
as $n\to\infty$.
Here $\{B(r):r\in[0,1]\}$ is an $m$-dimensional vector of independent Brownian
motions and the matrix  $Q$ is defined by
\begin{equation}\label{qmat}
Q = I_{m\times m}-\Omega^{-1/2}G\Sigma G' \Omega^{-1/2},
\end{equation}
where $A^{1/2}$ denotes  the square root of a nonnegative definite matrix $A$,
$G = G(\beta_0) =  {\partial g(\beta_0)}/ {\partial\beta'}$,  $\Sigma=(G'\Omega^{-1}G)^{-1}$ and
\begin{equation} \label{0mat}
\Omega :=
\E[g(\mathscr{Y}^{p}_t, \beta_0)g(\mathscr{Y}^{p}_t, \beta_0)']
= \frac{1}{4}\E[a^*(X_{t-1})a^*(X_{t-1})'].
\end{equation}
\end{theorem}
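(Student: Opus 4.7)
The plan is to reduce the test statistic to a quadratic form in the partial sums $S_{i:j} := \sum_{t=i}^j g(\mathscr{Y}_t^p,\beta_0)$ of the moment function and then apply a functional central limit theorem. First, I would establish, via the standard Owen-type expansion applied to the two Lagrange equations in \eqref{eq:08}, the stochastic expansion
\begin{equation*}
2 l_{n,k}(\beta,\beta) = \frac{1}{k}S_{1:k}(\beta)'\Omega^{-1}S_{1:k}(\beta) + \frac{1}{n-k}S_{k+1:n}(\beta)'\Omega^{-1}S_{k+1:n}(\beta) + o_p(1),
\end{equation*}
uniformly over $\beta$ in an $n^{-1/2}$-neighbourhood of $\beta_0$ and $k\in[k_{1n},k_{2n}]$, where $S_{i:j}(\beta):=\sum_{t=i}^j g(\mathscr{Y}_t^p,\beta)$. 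This involves solving $\sum g_t(\beta)/(1-\lambda'g_t(\beta))=0$ perturbatively to get $\lambda = -\Omega^{-1}S_{1:k}(\beta)/k + o_p(k^{-1/2})$ and a second-order Taylor expansion of $\log(1-\lambda'g_t(\beta))$; the uniform remainder bound uses Assumption \ref{ass:3} to replace the empirical second-moment matrix by $\Omega$ and a maximal inequality under the mixing condition of Assumption \ref{ass:4}(ii).

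The main technical obstacle is a Bahadur-type representation for the partial sums,
\begin{equation*}
S_{1:k}(\beta) = S_{1:k}(\beta_0) + kG(\beta-\beta_0) + o_p(\sqrt n),
\end{equation*}
uniformly over $k\in[k_{1n},k_{2n}]$ and $\|\beta-\beta_0\|\leq Cn^{-1/2}$, with $G := \partial \E[g(\mathscr{Y}_t^p,\beta_0)]/\partial\beta' = -f(0)\,\E[a^*(X_{t-1})X_{t-1}']$ obtained by differentiating the population moment at $\beta_0$ using Assumption \ref{ass:1}(iv). Since $g_t(\beta)$ is piecewise constant in $\beta$ via the indicator $\mathbb{I}(e_t \leq X_{t-1}'(\beta-\beta_0))$, the summands are non-smooth; the oscillation of the centred empirical process $\sum\{g_t(\beta)-g_t(\beta_0)-\E[g_t(\beta)-g_t(\beta_0)]\}$ must be controlled uniformly in $\beta$ by a chaining argument adapted to the strong-mixing regime with rate $\sum\alpha_l^{1-2/\gamma}<\infty$ from Assumption \ref{ass:4}(ii).

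Combining the two expansions and minimizing the resulting quadratic form over $\beta$ gives $\hat\beta_{n,k}-\beta_0 = -\Sigma G'\Omega^{-1}S_{1:n}/n + o_p(n^{-1/2})$, a quantity independent of $k$ to leading order; Theorem \ref{thm:1} certifies that the true maximizer lies in the requisite $n^{-1/2}$-ball. Plugging this back and collecting terms yields, uniformly in $k\in[k_{1n},k_{2n}]$,
\begin{equation*}
2 l_{n,k}(\hat\beta_{n,k},\hat\beta_{n,k}) = \frac{S_{1:k}'\Omega^{-1}S_{1:k}}{k} + \frac{S_{k+1:n}'\Omega^{-1}S_{k+1:n}}{n-k} - \frac{S_{1:n}'\Omega^{-1}G\Sigma G'\Omega^{-1}S_{1:n}}{n} + o_p(1).
\end{equation*}

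Finally, the invariance principle for strong-mixing sequences under Assumption \ref{ass:4} gives $\Omega^{-1/2}S_{1:\lfloor nr\rfloor}/\sqrt n \Rightarrow B(r)$ in $D[r_1,r_2]$, where $B$ is an $m$-dimensional standard Brownian motion. Using the identities
\[
\frac{\|B(r)\|^2}{r}+\frac{\|B(1)-B(r)\|^2}{1-r} = \frac{\|B(r)-rB(1)\|^2}{r(1-r)} + \|B(1)\|^2, \qquad \|B(1)\|^2 - B(1)'(I-Q)B(1) = B(1)'QB(1)
\]
(the second because $I-Q=\Omega^{-1/2}G\Sigma G'\Omega^{-1/2}$ is an orthogonal projection by \eqref{qmat}), the continuous mapping theorem converts the previous process expansion into weak convergence of $r\mapsto 2 l_{n,\lfloor nr\rfloor}(\hat\beta,\hat\beta)$ to $r\mapsto \|B(r)-rB(1)\|^2/(r(1-r)) + B(1)'QB(1)$ on $[r_1,r_2]$. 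A last application of continuous mapping to the functional $\omega \mapsto \sup_{r\in[r_1,r_2]}h(r)\omega(r)$, bounded by Assumption \ref{ass:5}, delivers \eqref{weak}.
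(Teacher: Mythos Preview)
Your proposal is correct and lands on exactly the same quadratic-form representation and limiting argument as the paper: after reduction, both obtain
\[
2P_{n,k} = \frac{S_{1:k}'\Omega^{-1}S_{1:k}}{k} + \frac{S_{k+1:n}'\Omega^{-1}S_{k+1:n}}{n-k} - \frac{S_{1:n}'\Omega^{-1}G\Sigma G'\Omega^{-1}S_{1:n}}{n} + o_p(1)
\]
uniformly in $k\in[k_{1n},k_{2n}]$, then apply the FCLT for mixing sequences and the continuous mapping theorem; your Brownian-bridge identity is precisely the algebra the paper uses implicitly when rewriting this as $\hat M_{n,k}$.

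The one methodological difference lies in how the reduction is carried out. You proceed by a direct Owen-type expansion of $l_{n,k}(\beta,\beta)$ uniformly over a shrinking $\beta$-ball, combined with a Bahadur representation $S_{1:k}(\beta)=S_{1:k}(\beta_0)+kG(\beta-\beta_0)+o_p(\sqrt n)$, and then minimize the resulting quadratic in $\beta$ explicitly. The paper instead follows the GEL saddle-point comparison of \cite{parente2011gel}: it introduces the exact quadratic saddle-point problem $\hat L_{n,k}(\beta,\lambda,\eta)$, solves its first-order conditions in closed form (a $(p+2m)\times(p+2m)$ linear system), and shows via a three-step sandwich argument that the empirical saddle value $\hat P_{n,k}(\hat\beta,\hat\lambda,\hat\eta)$ coincides with $\hat L_{n,k}(\tilde\beta,\tilde\lambda,\tilde\eta)$ up to $o_p(1)$. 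Your route is more elementary and transparent about where the non-smoothness of the indicator enters (your chaining step for the Bahadur remainder is the paper's inequality \eqref{eq:A7}, which the paper states without a detailed proof); the paper's route packages the joint optimization over $(\beta,\lambda,\eta)$ more cleanly and avoids having to argue separately that the true maximizer $\hat\beta_{n,k}$ agrees with the explicit minimizer of the quadratic approximation.
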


A test for the hypotheses in \eqref{eq:01} is now easily obtained by rejecting the null hypothesis in \eqref{eq:01} whenever
\begin{equation}  \label{test}
\tilde T_n  > q_{1-\alpha},
\end{equation}
where  $q_{1-\alpha} $ is the $({1-\alpha})$-quantile of the distribution of the random variable $T$ defined on the right-hand side of equation
\eqref{weak} (using an appropriate estimate of the matrix $Q$).

\begin{theorem}\label{thm:3}
Suppose that Assumptions \ref{ass:1}-\ref{ass:5} and  the alternative $H_1: \theta_1\neq \theta_2$ hold.
Then we have
\begin{equation*}  
\tilde T_n \plim \infty
\end{equation*}
as $n\to\infty$.
\end{theorem}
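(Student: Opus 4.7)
The plan is to exhibit a single index $k_0 \in [k_{1n}, k_{2n}]$ at which the test statistic $\tilde T_n$ picks up a contribution diverging with $n$ under $H_1$. Fix $r_0 \in (r_1, r_2)$ with $h(r_0) > 0$, choosing $r_0 = k^*/n$ when the true change point lies in $(r_1, r_2)$ and any other admissible value otherwise, and set $k_0 = \lfloor r_0 n \rfloor$. For $j \in \{1, 2\}$ let $\mu_j(\beta) := \E[g(\mathscr{Y}_t^p, \beta)]$ when the observation at time $t$ follows regime $j$; a direct calculation gives
\[
\mu_j(\beta) \;=\; \E\bigl[\bigl(\tfrac{1}{2} - F(X_{t-1}'(\beta - \theta_j))\bigr)\, a^*(X_{t-1})\bigr],
\]
which vanishes iff $\beta = \theta_j$, since $f(0) > 0$ together with Assumption \ref{ass:3} makes the linearized moment nondegenerate. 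A uniform-in-$\beta$ law of large numbers (based on the mixing condition of Assumption \ref{ass:4}(ii) and bracketing of the monotone indicator class $\beta \mapsto \mathbb I(y_t - X_{t-1}'\beta \leq 0)$) gives $\sup_{\beta \in \mathcal B}\|M_n^{(j)}(\beta) - \tilde\mu_j(\beta)\| \plim 0$ for the partial self-weighted moments $M_n^{(1)}(\beta) := k_0^{-1}\sum_{i=1}^{k_0} g(\mathscr{Y}_i^p, \beta)$ and $M_n^{(2)}(\beta) := (n-k_0)^{-1}\sum_{j=k_0+1}^n g(\mathscr{Y}_j^p, \beta)$, where $\tilde\mu_j$ equals $\mu_1$, $\mu_2$, or a convex combination of the two depending on whether $k_0 < k^*$, $k_0 > k^*$, or $k_0 = k^*$. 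Since $\theta_1 \neq \theta_2$, no $\beta \in \mathcal B$ can annihilate both limits simultaneously, and compactness of $\mathcal B$ with continuity yield
\[
c := \inf_{\beta \in \mathcal B}\max\bigl\{\|\tilde\mu_1(\beta)\|, \|\tilde\mu_2(\beta)\|\bigr\} > 0.
\]

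Next I would exploit weak duality of the representation underlying \eqref{eq:08}: for every admissible pair $(\lambda, \eta)$,
\[
l_{n,k_0}(\beta, \beta) \;\geq\; \sum_{i=1}^{k_0} \log\bigl(1 - \lambda' g(\mathscr{Y}_i^p, \beta)\bigr) + \sum_{j=k_0+1}^n \log\bigl(1 - \eta' g(\mathscr{Y}_j^p, \beta)\bigr).
\]
For each $\beta$ I would pick $j(\beta) \in \{1, 2\}$ maximizing $\|M_n^{(j)}(\beta)\|$, set the companion multiplier to zero, and plug in $\lambda_\beta := -\delta_n\, M_n^{(j(\beta))}(\beta)/\|M_n^{(j(\beta))}(\beta)\|$ for the active block with $\delta_n \downarrow 0$ chosen so that $\delta_n \max_{t \leq n} \|a^*(X_{t-1})\| = o_p(1)$ while $\delta_n n \to \infty$; Assumption \ref{ass:4}(i) with $\gamma > 2$ gives $\max_t \|a^*(X_{t-1})\| = O_p(n^{1/\gamma})$, so any $\delta_n$ with $n^{-1} \ll \delta_n \ll n^{-1/\gamma}$ works. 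A second-order Taylor expansion of the logarithm then yields, uniformly in $\beta$,
\[
l_{n,k_0}(\beta, \beta) \;\geq\; \bigl(c - o_p(1)\bigr)\, \delta_n n \min(r_0, 1 - r_0) - O_p(\delta_n^2 n),
\]
whose right-hand side diverges to $+\infty$ by the choice of $\delta_n$. Translating through the definition of $P_{n,k_0}$ then gives $\tilde T_n \geq 2 h(r_0) \cdot (c/2)\, \delta_n n \min(r_0, 1 - r_0) \plim \infty$.

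The principal obstacle is making the uniform-in-$\beta$ lower bound of the second paragraph rigorous, since $g$ depends on $\beta$ through a non-smooth indicator and the regressors $X_{t-1}$ may be heavy-tailed. Both difficulties are handled exactly as in the proofs of Theorems \ref{thm:1} and \ref{thm:2}: the self-weighted structure built into $a^*$ (Assumption \ref{ass:4}(i)) tames the tails of $X_{t-1}$, while monotonicity-based bracketing over the compact set $\mathcal B$ controls the discontinuities of the indicator uniformly. The companion identifiability fact that $\mu_j(\beta)$ vanishes only at $\theta_j$ is immediate from the closed-form display above together with Assumption \ref{ass:3}.
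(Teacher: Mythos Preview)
Your argument is correct and follows a genuinely different route from the paper's own proof.

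The paper works at the true change point $k=k^*$ and leans on the quadratic expansion of $P_{n,k^*}$ obtained under the null (Lemma~\ref{lem:A4} and display \eqref{eq:A12}), formally replacing $\hat g^l(\beta_0)$ by $\hat g^l(\hat\beta_{n,k^*})$ in the saddle-point system \eqref{eq:A8}--\eqref{eq:A9} and observing that one of the two weighted quadratic forms $\hat g^l(\hat\beta)' \Omega^{-1}\hat g^l(\hat\beta)$ stays bounded away from zero under $H_1$. This is terse: the validity of that expansion under $H_1$, when the Lagrange multipliers are no longer guaranteed to be $O_p(n^{-1/2})$, is not re-justified. By contrast, you exploit the $\sup_{\lambda}$ representation of $l_{n,k_0}(\beta,\beta)$ directly and plug in an explicit test direction $\lambda_\beta$ of shrinking norm $\delta_n$ pointed against the active empirical moment; the smallness of $\lambda_\beta$ is then under your control rather than a consequence of null asymptotics, so the second-order Taylor bound is unambiguously valid. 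Your treatment is also more complete in that it explicitly accommodates $k^*/n\notin[r_1,r_2]$, which the paper leaves implicit. The only cost is a slower divergence rate (order $\delta_n n$ rather than order $n$), which is immaterial for consistency. Two small points worth noting in a final write-up: you implicitly need $h(r_0)>0$ (Assumption~\ref{ass:5} alone does not guarantee this), and the identifiability statement $\mu_j(\beta)=0 \Leftrightarrow \beta=\theta_j$ should be stated as an assumption under each regime, since Assumption~\ref{ass:3} is formulated only under $H_0$.
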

\noindent
Theorem \ref{thm:3} shows that the power of the test \eqref{test} approaches $1$ at any fixed alternative.
In other words, the test is consistent.

\section{Finite sample properties}  \label{sec4}
In this section, we illustrate the finite sample properties of  the ELR  test \eqref{test} for the hypothesis \eqref{eq:01} by means of small simulation study.
For this purpose we  consider  the AR(1) model
\begin{align*}
y_t = \beta y_{t-1} + e_t,
\end{align*}
where the  coefficient $\beta$ satisfies
\begin{align*}
\beta = \left\{
\begin{array}{ll}
\theta_1 & (t=1,\ldots,k^*)\\
\theta_2 & (t=k^*+1,\ldots,n)
\end{array}
\right.~.
\end{align*}
For the calculation of the ELR  statistic $\tilde{T}_n$ in \eqref{stat},
we use the functions $a(x)=x$ and $h(r) = r(1- r)$ throughout this section.
Following  \cite{ling2005self}, the self-weights are chosen as
\begin{align*}
w_{t-1} = \left\{
\begin{array}{ll}
1 & (d_{t-1}=0)\\
(c/d_{t-1})^3 & (d_{t- 1}\neq0)
\end{array}
\right.,
\end{align*}
where $d_{t-1} = |y_{t-1}|\mathbb{I}(|y_{t-1}|>c)$
and $c$ is the $95\%$-quantile of the sample
$\{y_0,y_1,\ldots,y_n\}$. The  trimming parameters in the definition of the statistic $\tilde T_n$   are chosen as $r_{1n}=0.1$
and $r_{2n}=0.9$. The critical value in \eqref{test} is obtained  as the empirical $95\%$ quantile
of the Monte-Carlo samples
\[
\left\{
\max_{k_{1n}\leq k\leq k_{2n}}
\left(B^{(l)}(k/n) - (k/n)B^{(l)}(1)\right)^2: l=1,\ldots,1000
\right\},
\]
where $B^{(1)}(\cdot), \ldots,  B^{(1000)}(\cdot)$ are  independent standard Brownian motions
(note that  in this case,       the matrix in \eqref{qmat} is given by $Q=0$).

In Figures \ref{fig:1}-\ref{fig:3},
we display the rejection probabilities of the ELR test \eqref{test} for the hypothesis
\eqref{eq:01}, where the  nominal level is chosen as $\alpha = 0.05$.
The horizontal and vertical axes show, respectively,
the values of $\theta_2$ and the rejection rate of the hypothesis
$H_0:\theta_1 = \theta_2$ at this point ($\theta_1$ is fixed as $0.3$).
The sample sizes are given by $n=100,200$ and $400$ and the distribution of the innovation process is a standard normal
distribution (Figure \ref{fig:1}), a $t$-distribution with $2$ degrees of freedom (Figure \ref{fig:2}) and a  Cauchy distribution (Figure \ref{fig:3}).
We also consider two values of the parameter $r$ in the definition of the change point  $k^* = r n$,
that is $r=0.5$ and $r=0.8$.

We observe that for small sample sizes, the test is slightly conservative and that the approximation of the nominal level improves with increasing sample size. The alternatives are rejected with reasonable probabilities, where the power is larger in the case $r= 0.5$ than for $r= 0.8$. A comparison of the different distributions in Figures \ref{fig:1}-\ref{fig:3} shows that the power is lower for standard normal distributed innovations, while an error process with a Cauchy distribution yields the largest rejection probabilities. Other simulations show a similar picture, and the results are omitted for the sake of brevity.

\begin{figure}[H]
\centering
\caption{\it  Simulated rejection probabilities of the ELR test \eqref{test} in the AR(1) model
with  normal distributed innovations. } \label{fig:1}
\medskip
\medskip

\begin{tabular}{cc}
(a) $\theta_1 = 0.3$, $r=0.5$ & (b) $\theta_1 = 0.3$, $r=0.8$\\
  \includegraphics[width=0.5\hsize]{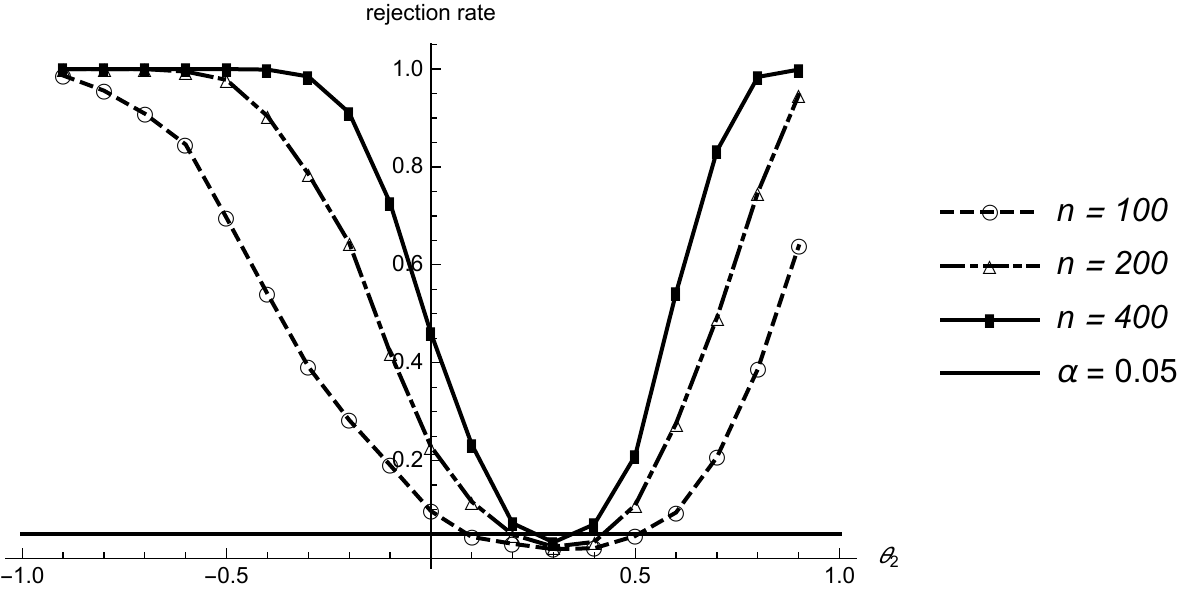}&
  \includegraphics[width=0.5\hsize]{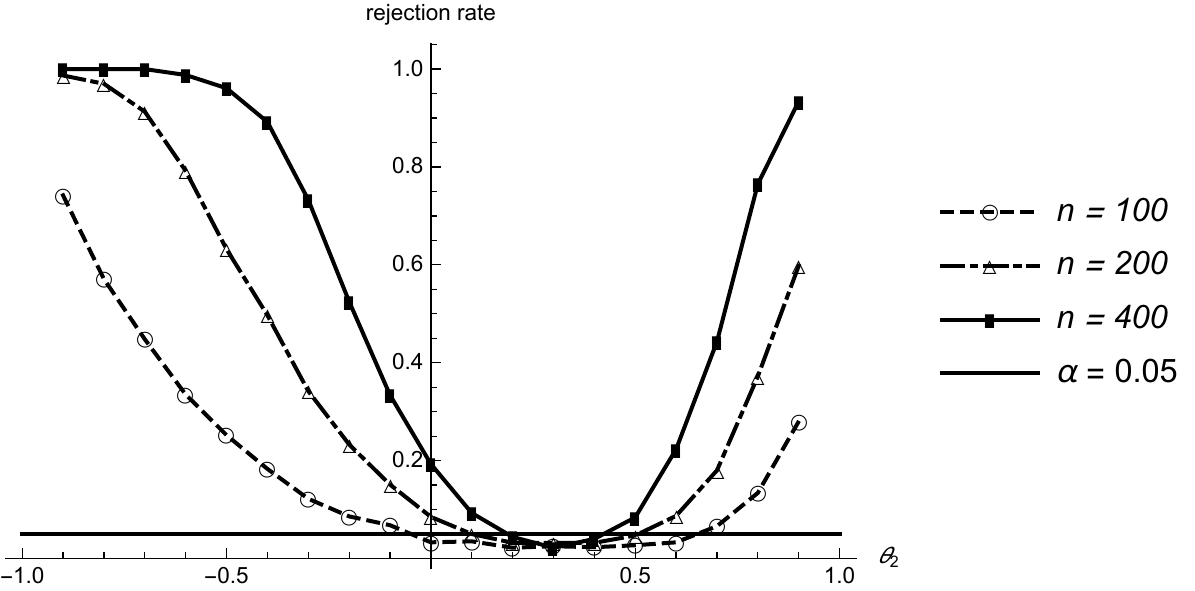}
\end{tabular}
\end{figure}

\begin{figure}[H]
\centering
\caption{\it  Simulated rejection probabilities of the ELR test \eqref{test} in the AR(1) model
with $t$-distributed innovations. } \label{fig:2}
\medskip
\medskip

\begin{tabular}{cc}
(a) $\theta_1 = 0.3$, $r=0.5$ & (b) $\theta_1 = 0.3$, $r=0.8$\\
  \includegraphics[width=0.5\hsize]{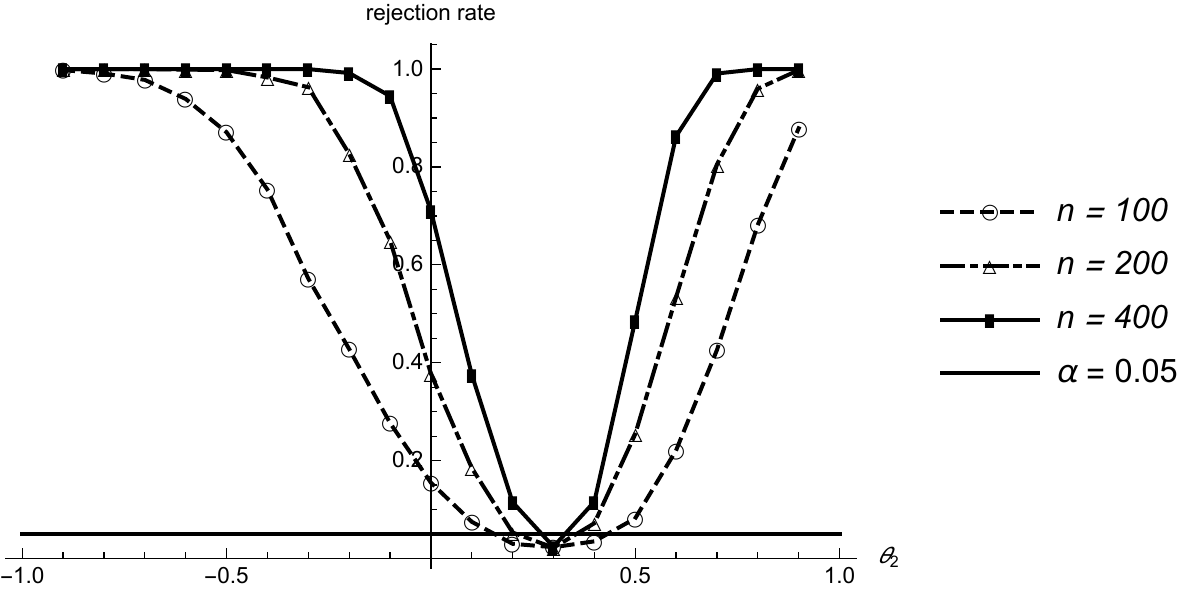}&
  \includegraphics[width=0.5\hsize]{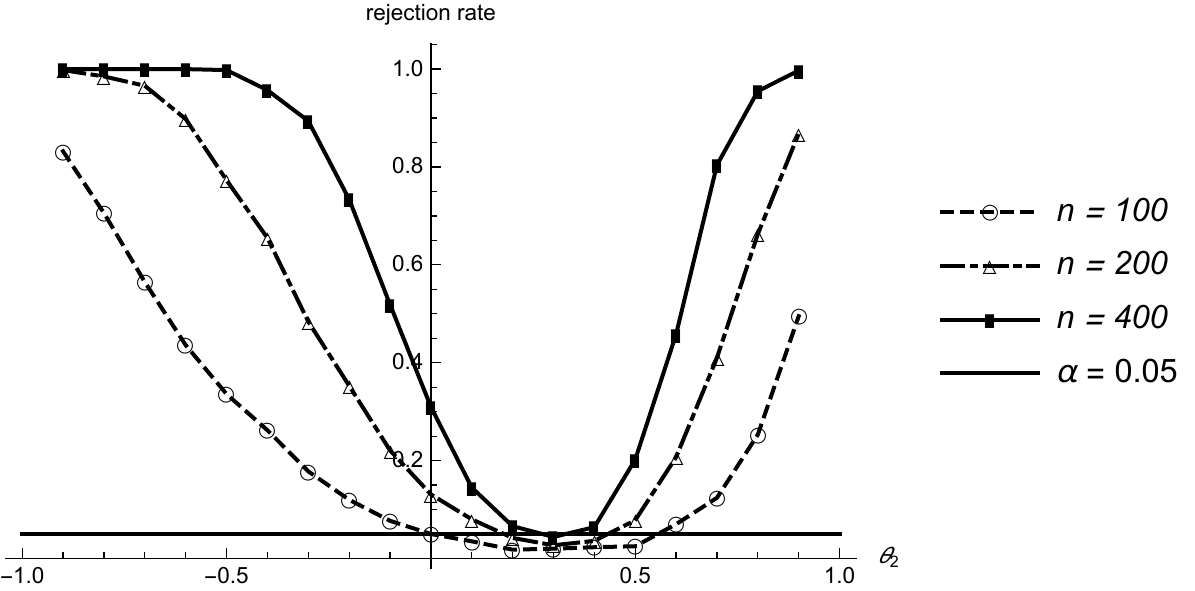}
\end{tabular}

\end{figure}

\begin{figure}[H]
\centering
\caption{\it  Simulated rejection probabilities of the ELR test \eqref{test} in the AR(1) model
with Cauchy distributed innovations. }
\label{fig:3}
\medskip
\medskip

\begin{tabular}{cc}
(a) $\theta_1 = 0.3$, $r=0.5$ & (b) $\theta_1 = 0.3$, $r=0.8$\\
  \includegraphics[width=0.5\hsize]{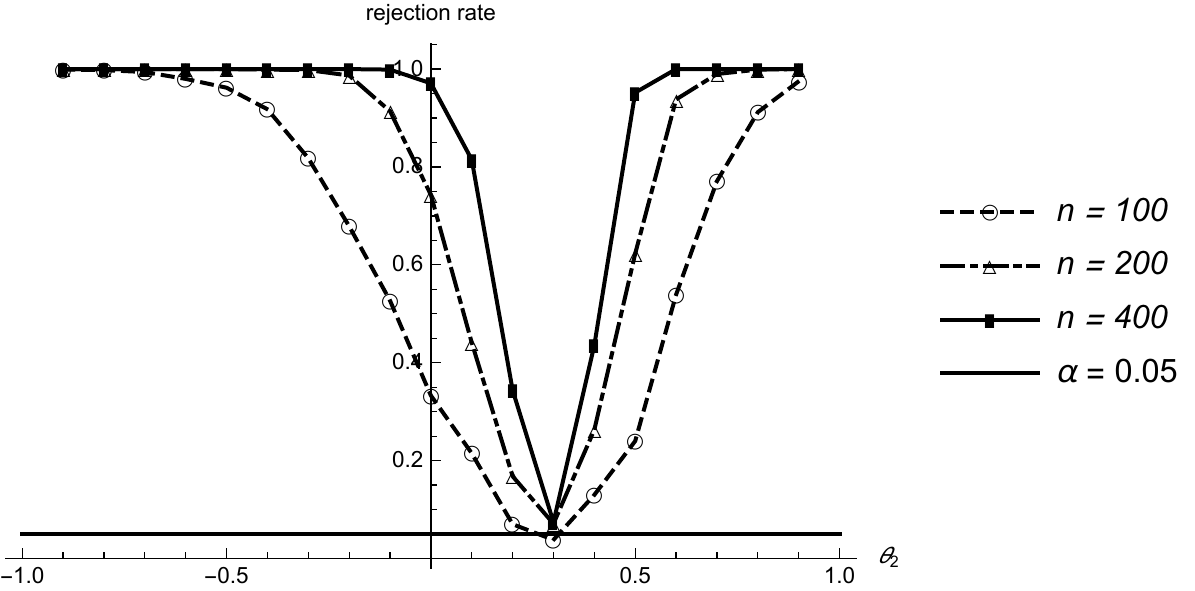}&
  \includegraphics[width=0.5\hsize]{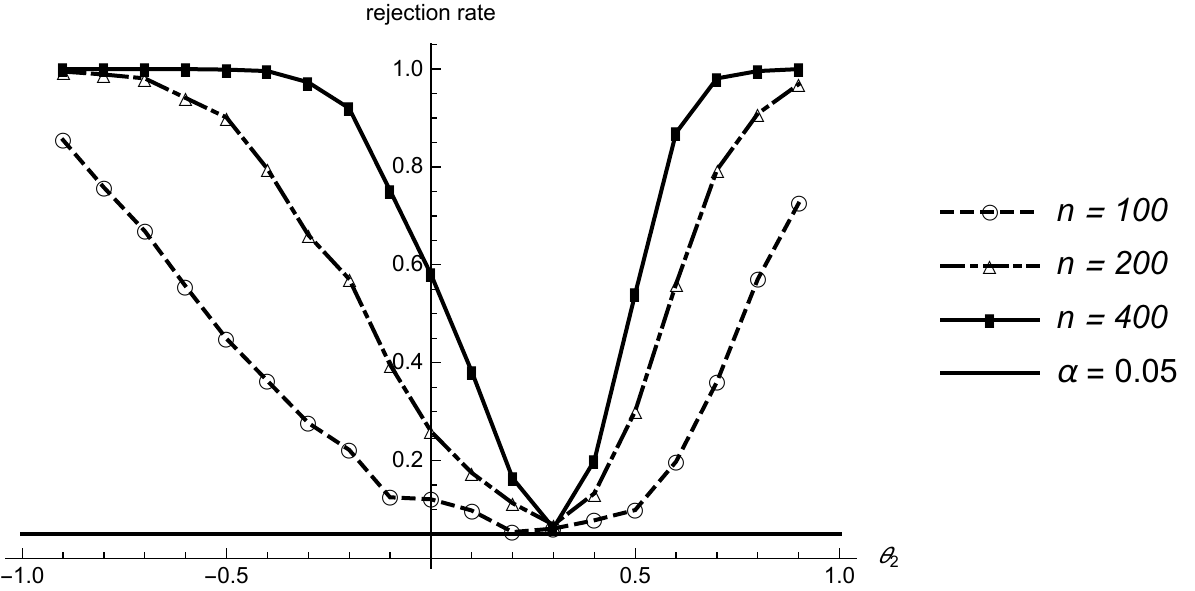}
\end{tabular}
\end{figure}

In the second part of this section
we compare   the new test defined by \eqref{test} with
the CUSUM test in \cite{qu2008} which uses quantile regression.
The test statistic for the median in \cite{qu2008} is defined by
\begin{equation} \label{qu}
{\rm SQ}_{0.5}
=
\sup_{\lambda  \in [0, 1]} \|H_{\lambda, n} (\hat{\beta}) - \lambda H_{1, n}(\hat{\beta}) \|,
\end{equation}
where $\|\cdot\|$ is the sup norm, $\hat{\beta}$ is the median regressor,
\[
H_{\lambda, n} = (\mathcal{X}' \mathcal{X})^{-1/2} \sum_{i = 1}^{[\lambda n]} |y_t - X_{t-1}' \hat{\beta}| X_{t - 1},
\]
and the matrix $\mathcal{X}$ is given by $\mathcal{X} = (X_1, \dots, X_n)'$.
In Figures \ref{fig:1c}-\ref{fig:3c}, we display the rejection probabilities of
the test based on the statistic $T_n$ in \eqref{tn}, $\tilde{T}_n$ in \eqref{stat} and ${\rm SQ}_{0.5}$ in \eqref{qu} for the hypothesis \eqref{eq:01},
where the nominal level is chosen as $\alpha = 0.05$.
The horizontal and vertical axes show, respectively, the values of $\theta_2$
and the rejection rate of the hypothesis $H:\theta_1 = \theta_2$
at this point ($\theta_1$ is fixed as 0.3).
The distribution of the innovation process is a standard normal distribution (Figure \ref{fig:1c}),
a $t$-distribution with 2 degree of freedom (Figure \ref{fig:2c}) and a Cauchy distribution (Figure \ref{fig:3c}) and
the sample sizes are given by $n = 100$, $200$ and $400$ in each case.
Again we consider two different locations for the change point  $k^*$
corresponding to the  values $r = 0. 5$ and $r = 0.8$.

We observe that  all tests derived from  the  three statistics $T_n$ in \eqref{tn} (corresponding to the weight function $h(r)  \equiv 1$), $\tilde{T}_n$ in \eqref{stat} (corresponding to the weight function $h(r) = r(1-r)$) and ${\rm SQ}_{0.5}$ in \eqref{qu}  are slightly conservative and that the approximation of the nominal level improves with increasing sample size
[see Figure \ref{fig:1c}-\ref{fig:3c} for the value $\theta_2=\theta_1=0.3$]. The approximation is usually more accurate
for  $r=0.5$.

Next we compare the power of the different tests (i.e. $\theta_2 \not =\theta_1=0.3$)  for different distributions of the innovations.
In the case of Gaussian innovations all tests shows a similar behavior (see Figure \ref{fig:1c}) and only if the case $n=200$ and $r=0.8$
the ELR test based on the (unweighted) statistic $T_n$ shows a better performance as the tests based
on   $\tilde{T}_n$ and ${\rm SQ}_{0.5}$. Moreover, for Gaussian innovations all three tests show a remarkable robustness
against non-stationarity, that is $|\theta_2| = 1$.

In Figure \ref{fig:2c} we
display corresponding results for $t_2$-distributed innovations. The differences in the approximation of the nominal level are negligible
($\theta_2=\theta_1=0.3$).
If $r=0.5$ we do not observe substantial differences in the power
between the three tests (independently of the sample size). On the other hand, if $r=0.8$
the  tests based on ELR statistics  $\tilde{T}_n$ and $T_n$ yield larger rejection probabilities   than the test ${\rm SQ}_{0.5}$
(see the right part of Figure Figure \ref{fig:2c}). Interestingly the unweighted test based on $T_n$ shows a better performance
than the test based on $\tilde{T}_n$ in these cases. Again, all tests are robust with respect to non-stationarity.

Finally,  in Figure \ref{fig:3c} we display the rejection probabilities of the three tests for Cauchy distributed innovations, where
we again do not observe differences in the approximation of the nominal   level ($\theta_2=\theta_1=0.3$). On the other hand
the differences in power  between the tests based on ELR and quantile regression are remarkable. In all cases  the ELR
tests based on $T_n$ and $\tilde{T}_n$ have substantially more power  than the test based on ${\rm SQ}_{0.5}$.
The  ELR test based on the unweighted statistic ${T}_n$ shows a better performance than the ELR test based on
$\tilde T_n$. This superiority is less pronounced in the  case  $r = 0.5$ but clearly visible for  $r = 0.8$.
Finally, in contrast to the test based on $SQ_{0.5}$ the ELR tests based on  $T_n$ and $\tilde{T}_n$ are  robust against non-stationarity
 (i.e. $|\theta_2| = 1$) for Cauchy distributed innovations and clearly detect a change  in the parameters in these cases.

 \begin{figure}[H]
 \centering
 \caption{\it  Simulated rejection probabilities of various change point tests based on the statistics $T_n$, $\tilde{T}_n$ and SQ$_{0.5}$
defined in \eqref{tn}, \eqref{stat} and \eqref{qu}, respectively. The model is given by an AR(1) model
 with  normal distributed innovations. } \label{fig:1c}
\medskip
\medskip
\begin{tabular}{cc}
\multicolumn{2}{c}{(i) $n=100$}\\ (a) $r=0.5$ & (b)  $r=0.8$\\
  \includegraphics[width=0.5\hsize]{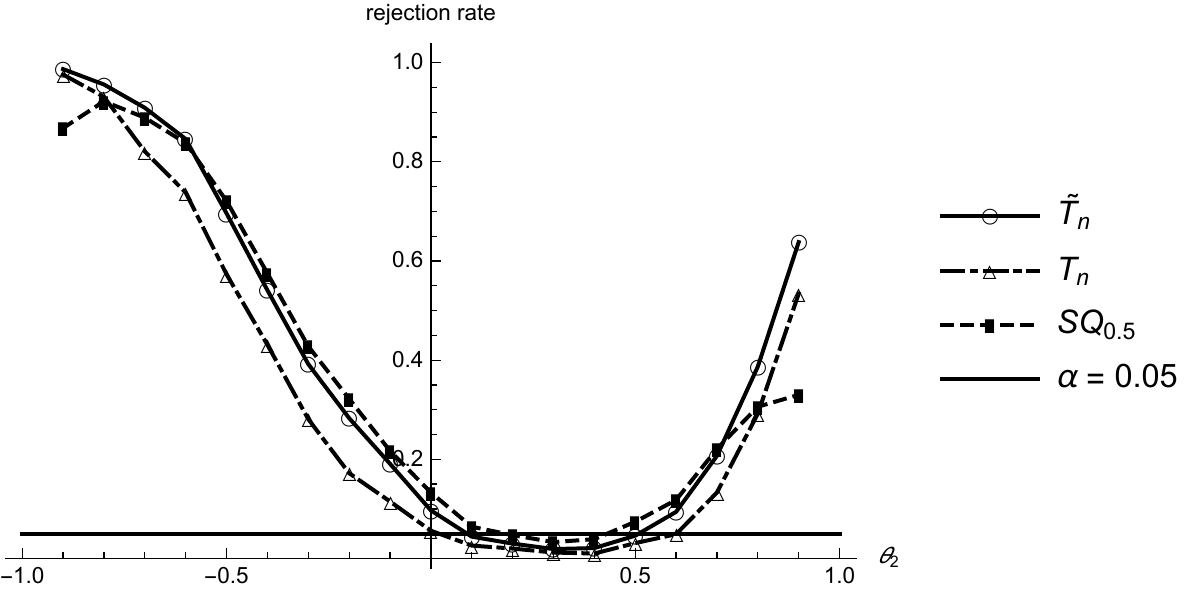}&
  \includegraphics[width=0.5\hsize]{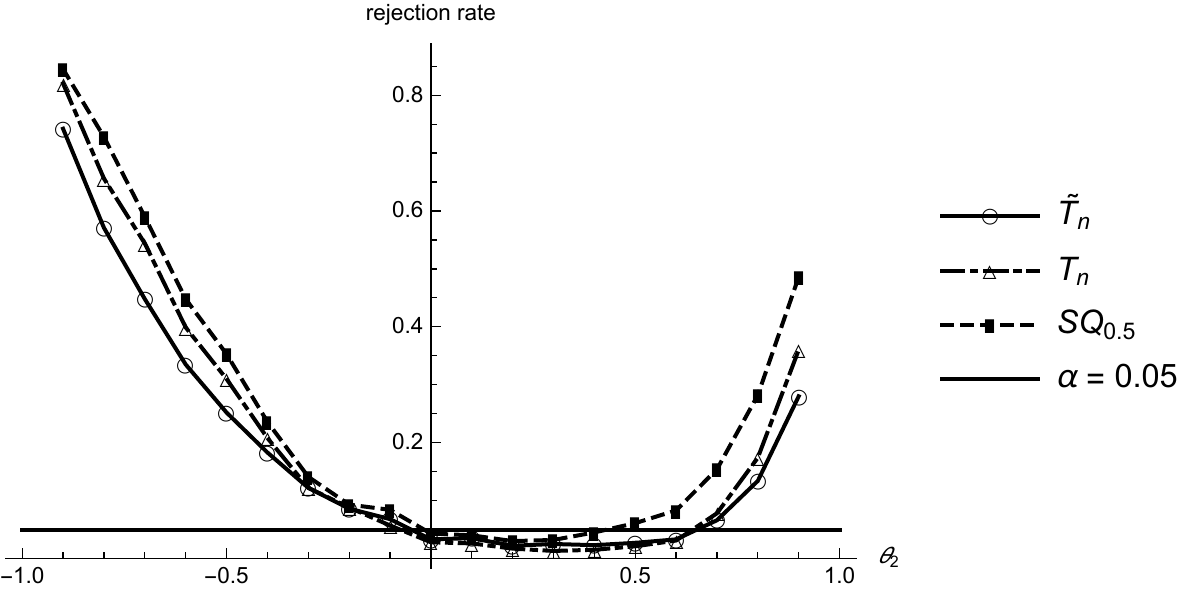}\\ \medskip\\
\multicolumn{2}{c}{(ii) $n=200$}\\ (a) $r=0.5$ & (b)  $r=0.8$\\
 \includegraphics[width=0.5\hsize]{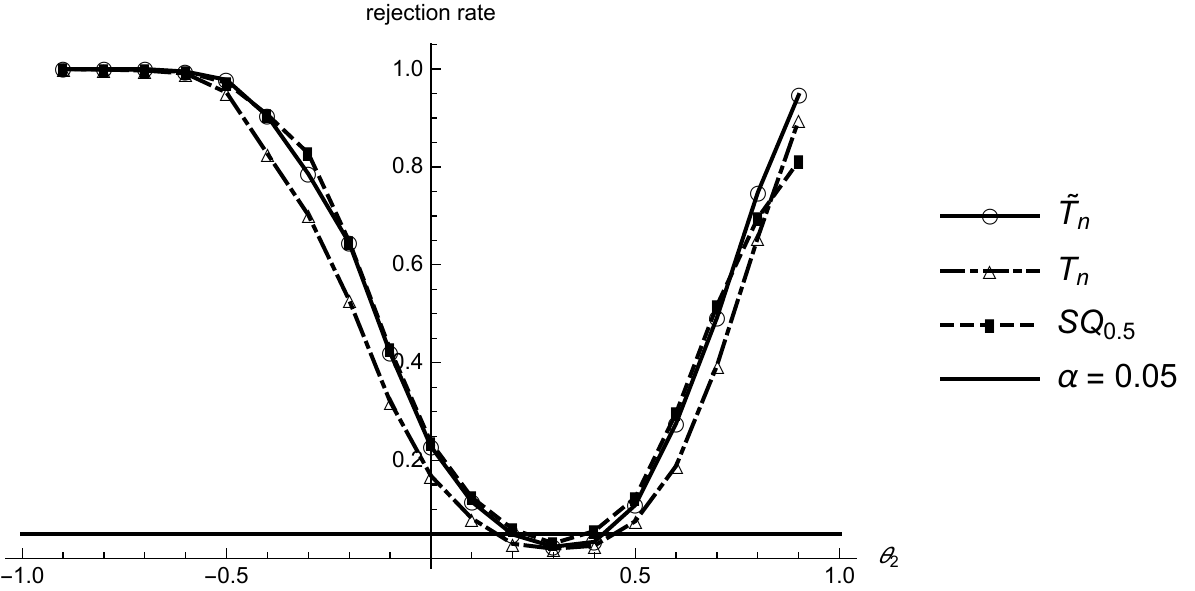}&
 \includegraphics[width=0.5\hsize]{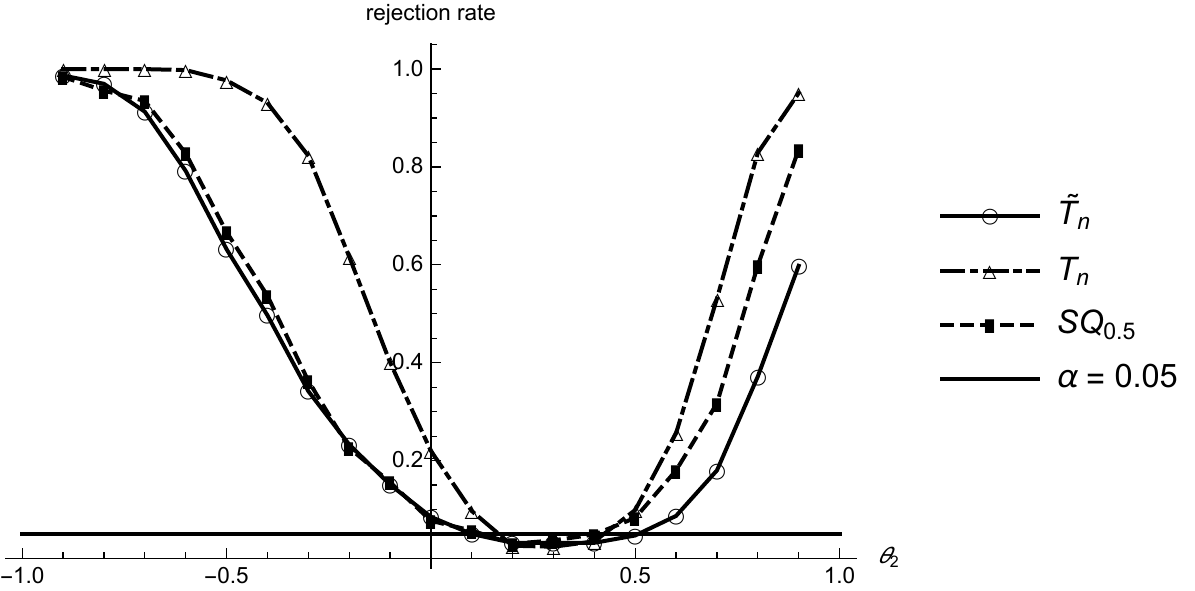}\\ \medskip\\
\multicolumn{2}{c}{(iii) $n=400$}\\ (a) $r=0.5$ & (b)  $r=0.8$\\
 \includegraphics[width=0.5\hsize]{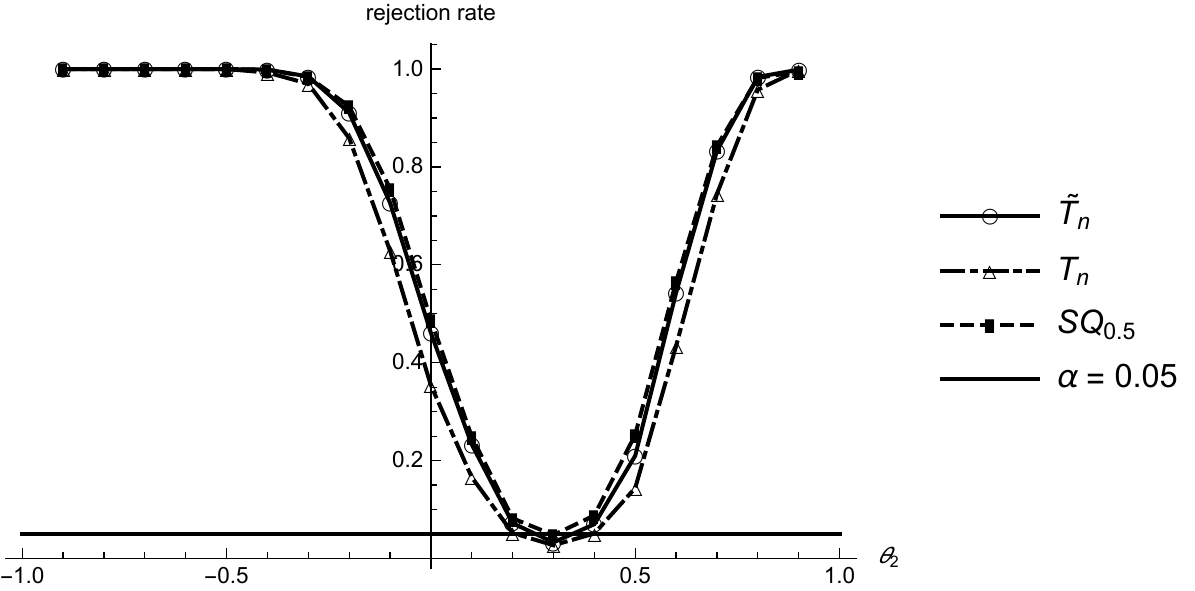}&
 \includegraphics[width=0.5\hsize]{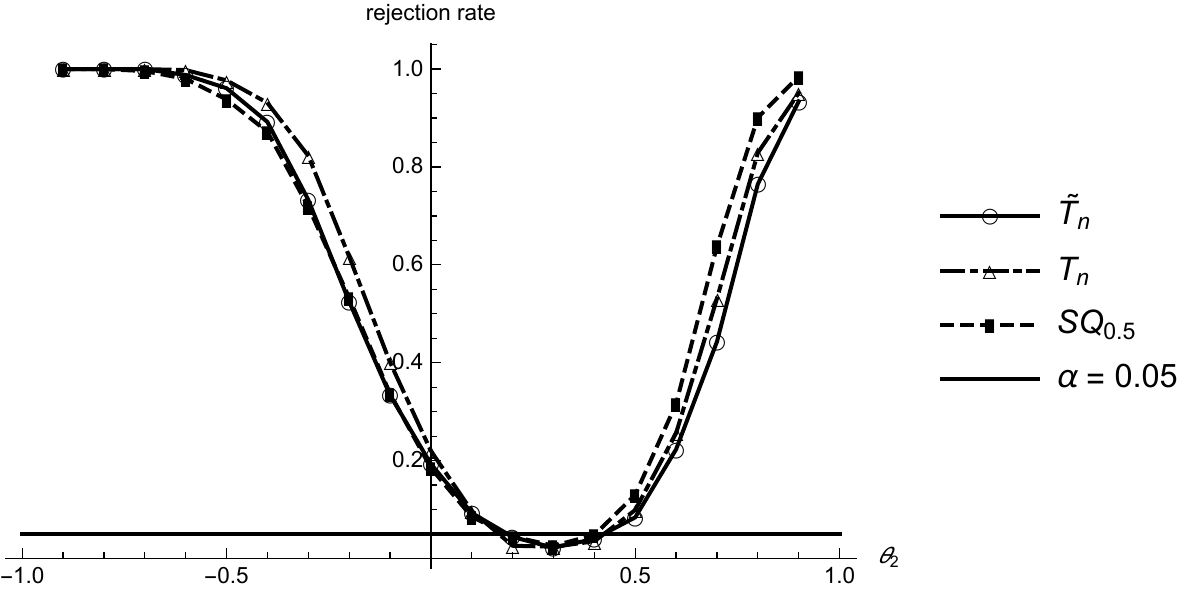}\\
\end{tabular}
\end{figure}

\begin{figure}[H]
\centering
 \caption{\it  Simulated rejection probabilities of various change point tests based on the statistics $T_n$, $\tilde{T}_n$ and SQ$_{0.5}$
defined in \eqref{tn}, \eqref{stat} and \eqref{qu}, respectively. The model is given by an AR(1) model
 with  $t_2$-distributed innovations. } \label{fig:2c}
\medskip
\medskip
\begin{tabular}{cc}
\multicolumn{2}{c}{(i) $n=100$}\\ (a) $r=0.5$ & (b)  $r=0.8$\\
 \includegraphics[width=0.5\hsize]{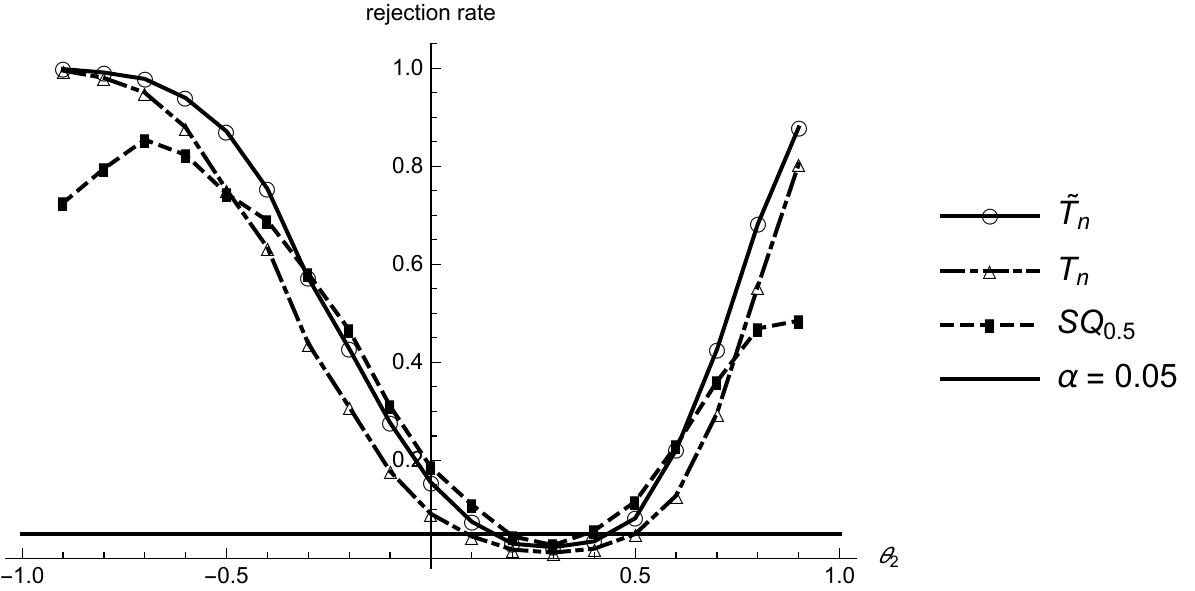}&
 \includegraphics[width=0.5\hsize]{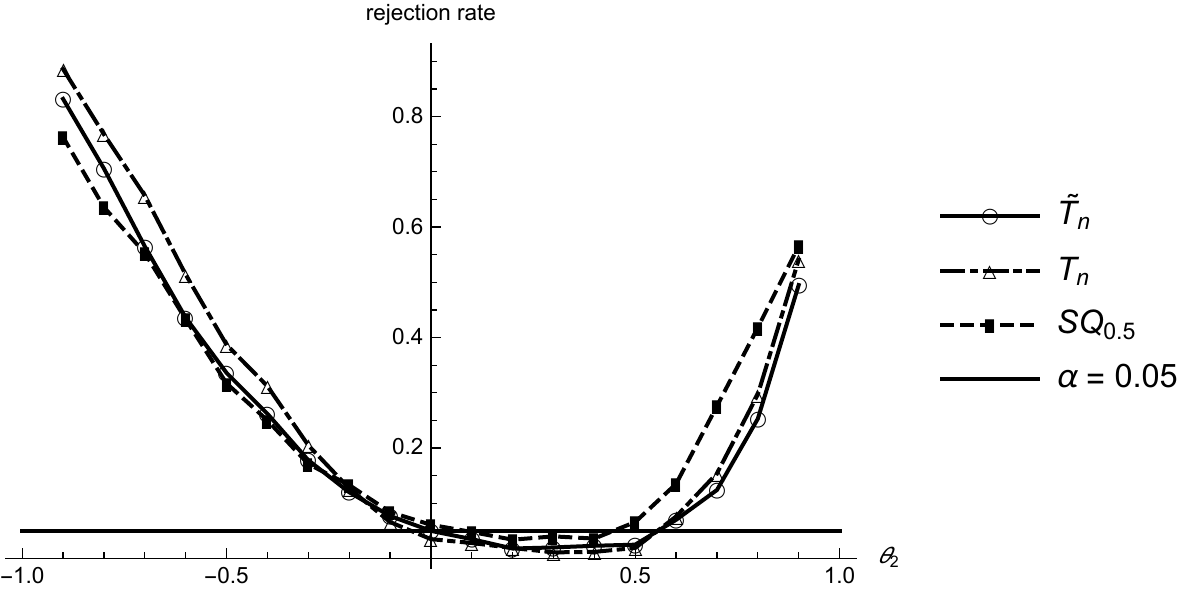}\\ \medskip\\
\multicolumn{2}{c}{(ii) $n=200$}\\ (a) $r=0.5$ & (b)  $r=0.8$\\
 \includegraphics[width=0.5\hsize]{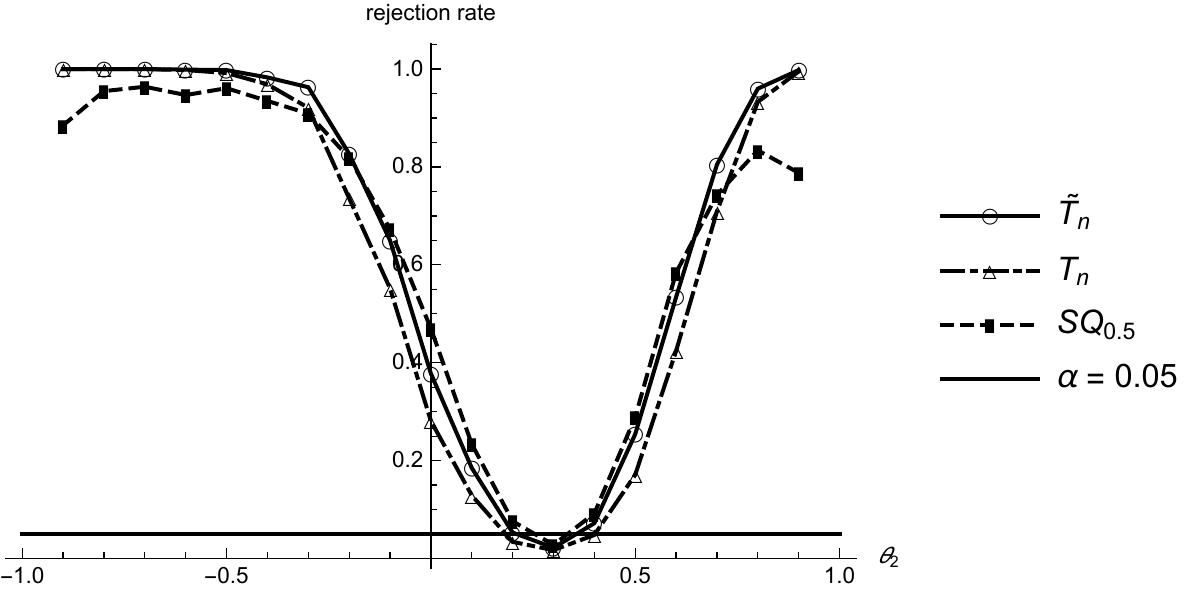}&
 \includegraphics[width=0.5\hsize]{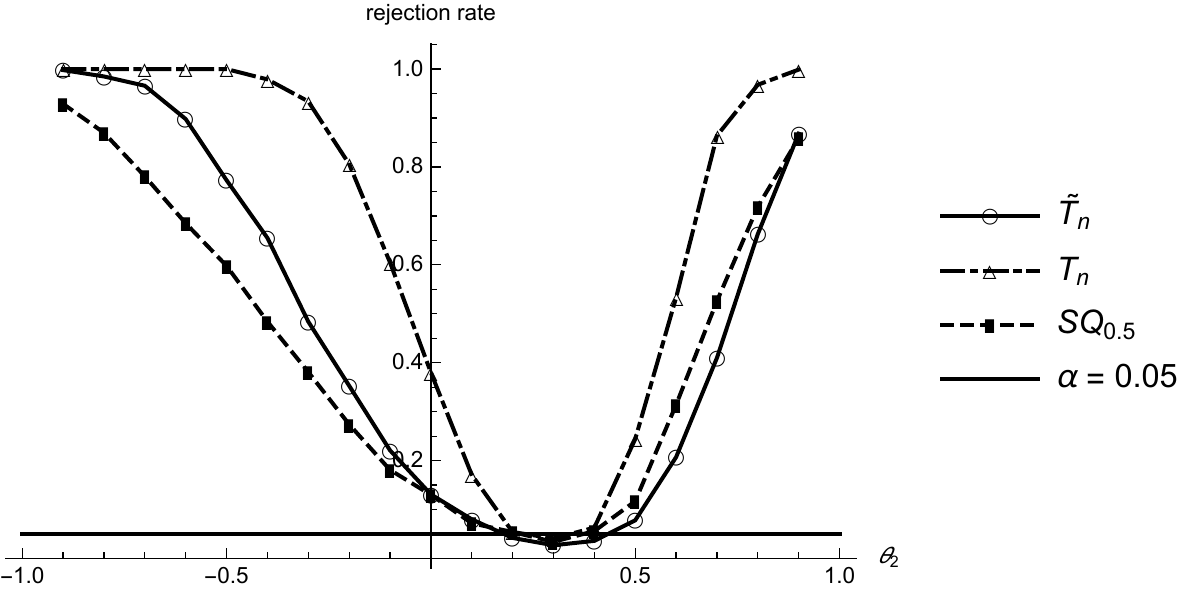}\\ \medskip\\
\multicolumn{2}{c}{(iii) $n=400$}\\ (a) $r=0.5$ & (b)  $r=0.8$\\
 \includegraphics[width=0.5\hsize]{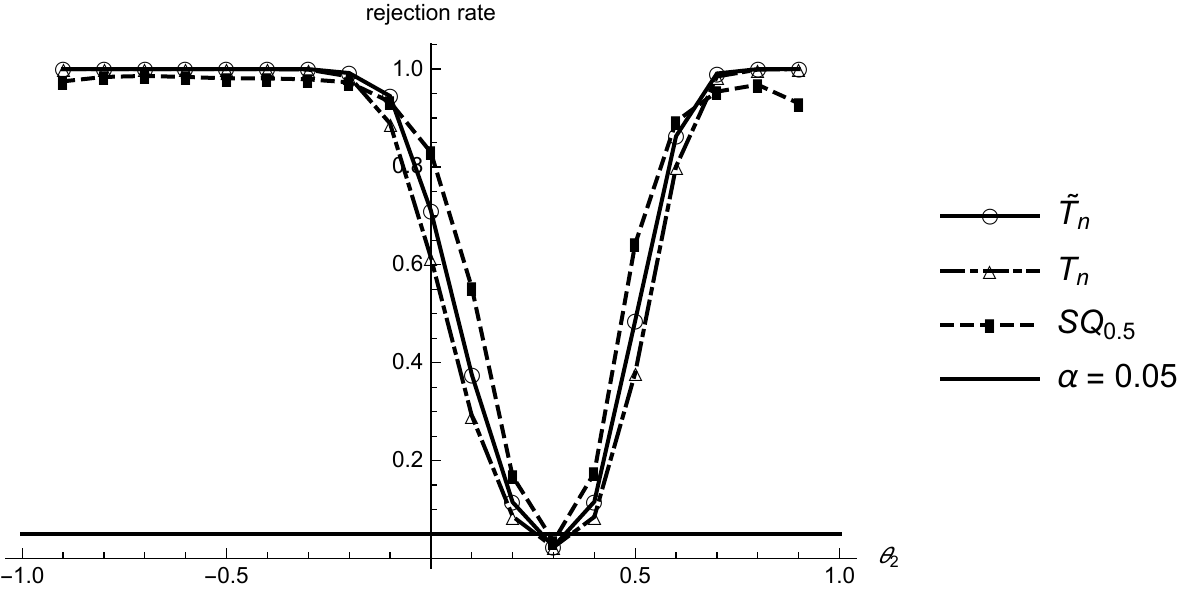}&
 \includegraphics[width=0.5\hsize]{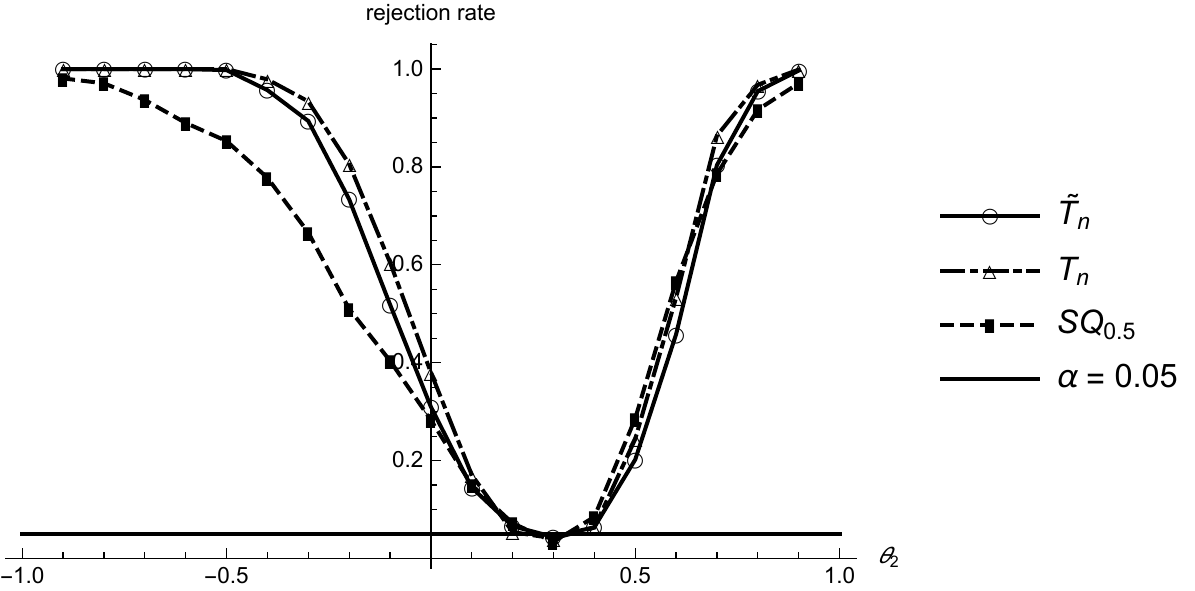}\\
\end{tabular}
\end{figure}

\begin{figure}[H]
\centering
\caption{\it  Simulated rejection probabilities of various change point tests based on the statistics $T_n$, $\tilde{T}_n$ and SQ$_{0.5}$
defined in \eqref{tn}, \eqref{stat} and \eqref{qu}, respectively. The model is given by an AR(1) model
 with  Cauchy distributed innovations. } \label{fig:3c}
\medskip
\medskip
\begin{tabular}{cc}
\multicolumn{2}{c}{(i) $n=100$}\\ (a) $r=0.5$ & (b)  $r=0.8$\\
 \includegraphics[width=0.5\hsize]{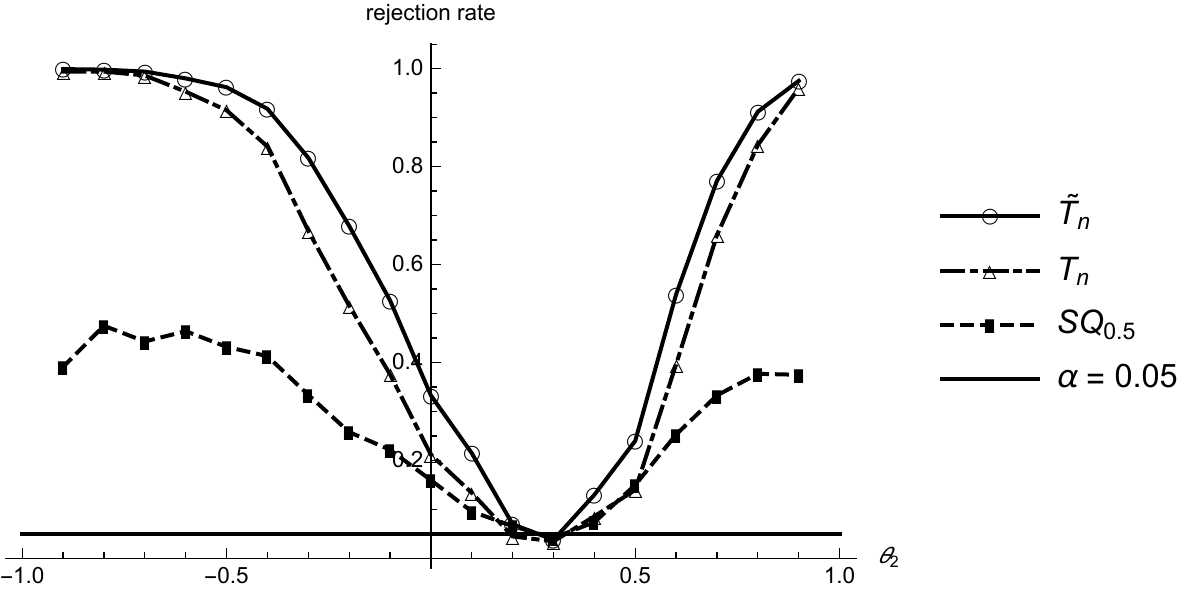}&
 \includegraphics[width=0.5\hsize]{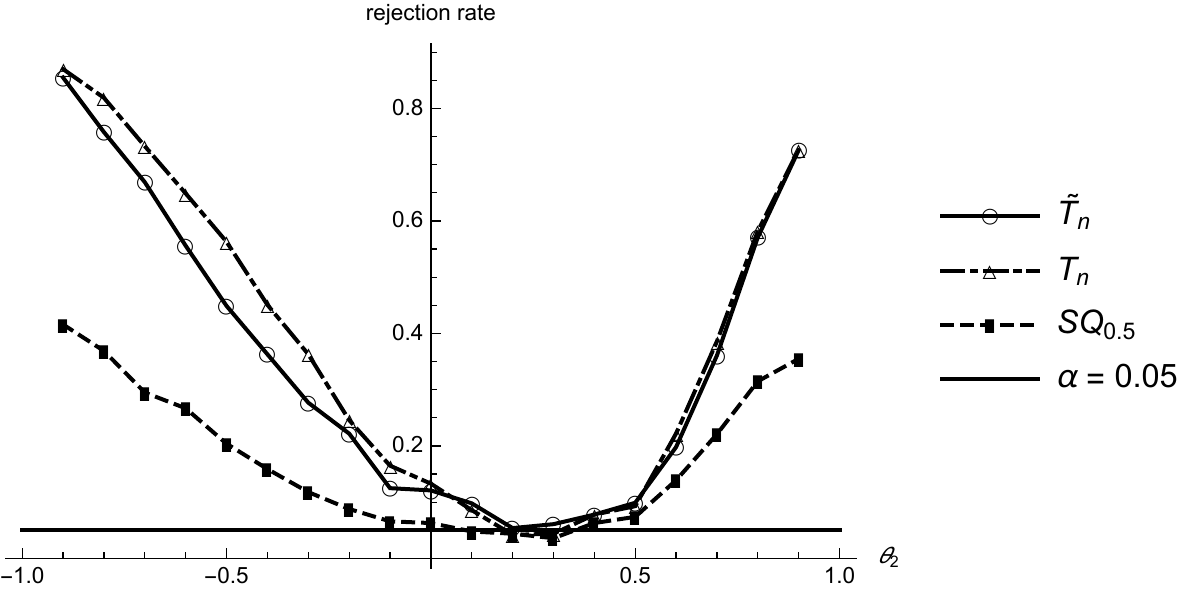}\\ \medskip\\
\multicolumn{2}{c}{(ii) $n=200$}\\ (a) $r=0.5$ & (b)  $r=0.8$\\
 \includegraphics[width=0.5\hsize]{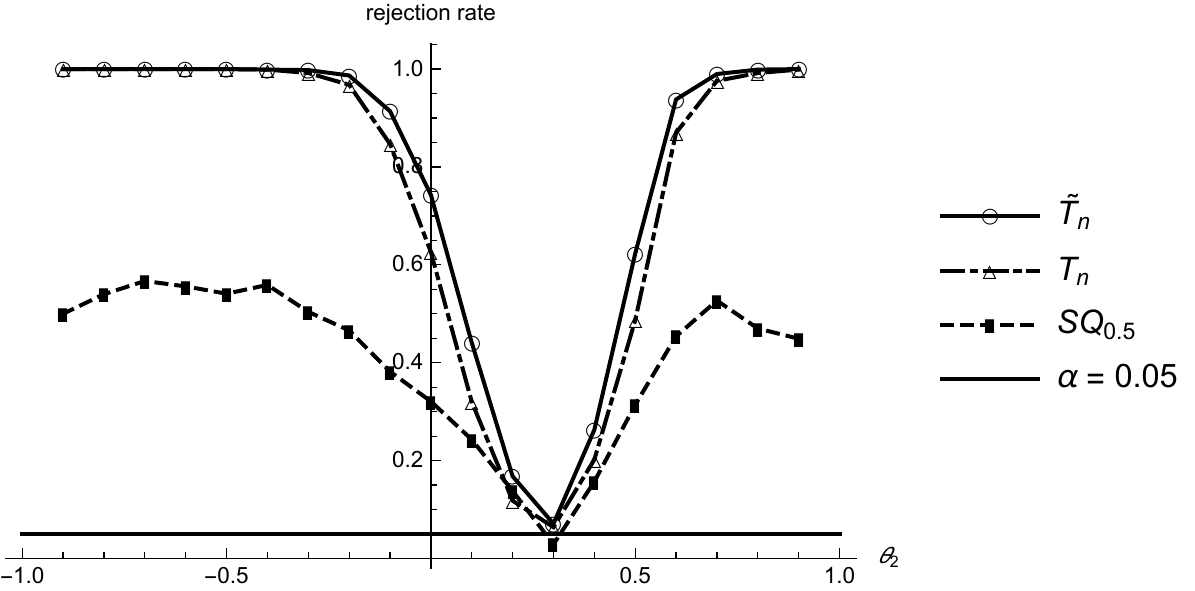}&
 \includegraphics[width=0.5\hsize]{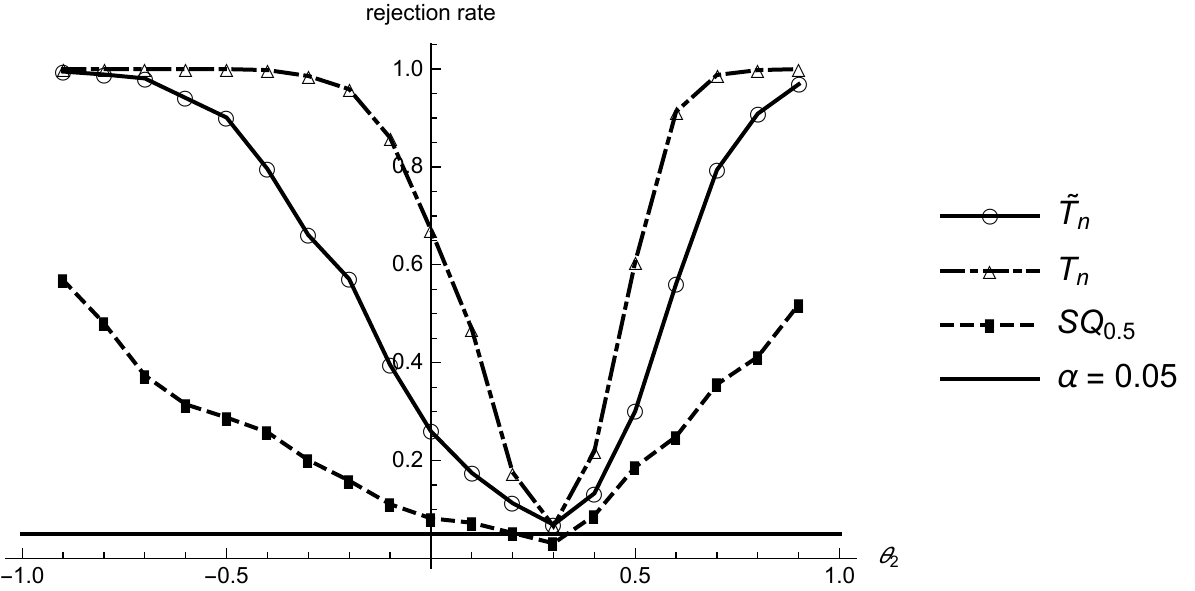}\\ \medskip\\
\multicolumn{2}{c}{(iii) $n=400$}\\ (a) $r=0.5$ & (b)  $r=0.8$\\
 \includegraphics[width=0.5\hsize]{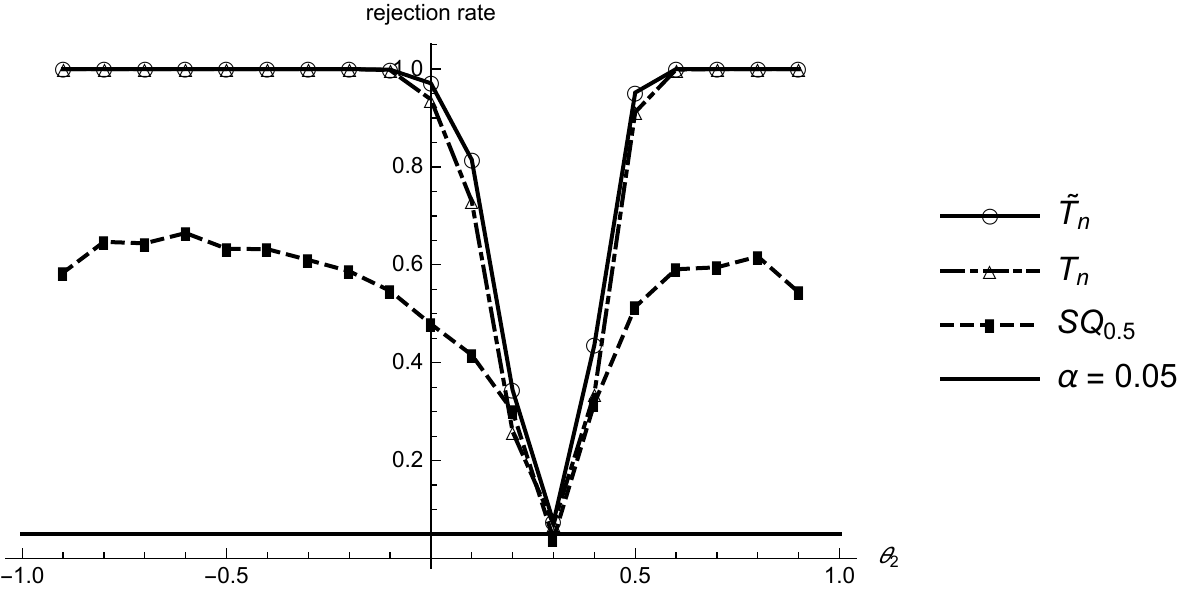}&
 \includegraphics[width=0.5\hsize]{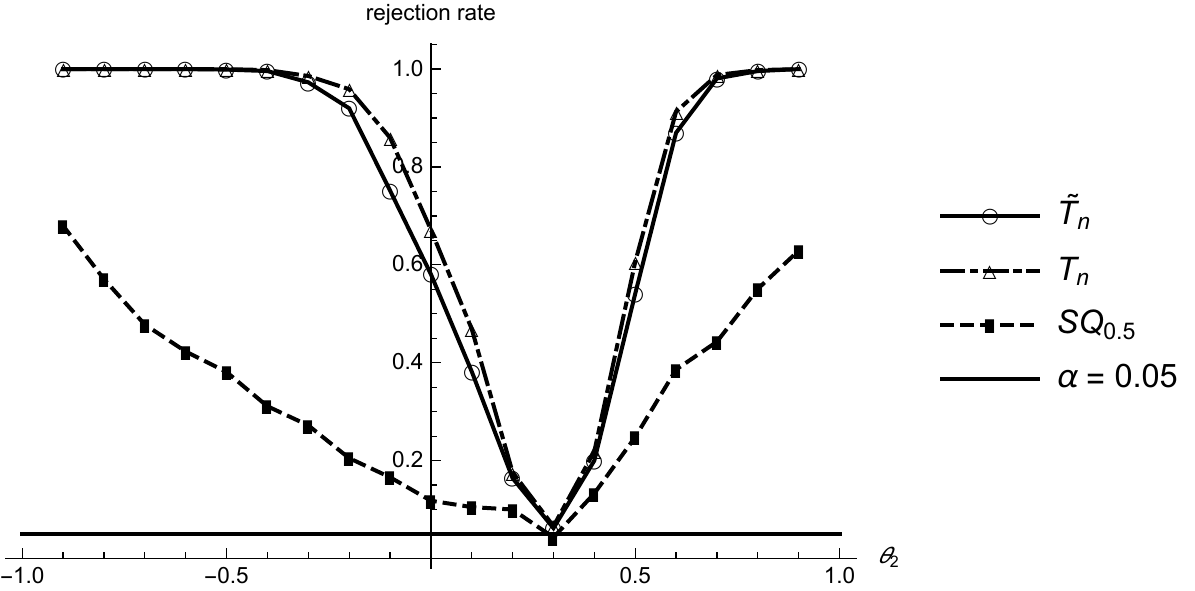}\\
\end{tabular}
\end{figure}

\section{Proofs} \label{sec5}
This section gives rigorous proofs of all  results in this paper.
In what follows, $C$ will denote a generic positive constant that varies in different places.
 ``with probability approaching one" will be abbreviated as w.p.a.1.
Moreover, we use the following notations throughout this section:
\begin{align}
&g_i(\beta) = g(\mathscr{Y}_{i}^{p} , \beta), \quad g(\beta) = \E[g(\mathscr{Y}_{i}^{p} , \beta)],\notag\\
&\hat{P}_k^1(\beta,\lambda) = \frac{1}{k}\sum_{i=1}^{k}\log\{1-\lambda'g_i(\beta)\},\notag\\
&\hat{P}^2_{n, k}(\beta,\eta) = \frac{1}{n-k}\sum_{j=k+1}^n\log\{1-\eta'g_j(\beta)\},\notag\\
&\hat\Lambda^1_{k}(\beta) = \left\{\lambda\in\R^m: |\lambda'g_i(\beta)| <1
\text{ for all $i=1,\ldots,k$}\right\},\notag\\
&\hat\Lambda^2_{n, k}(\beta) = \left\{\eta\in\R^m: |\eta'g_j(\beta)| <1
\text{ for all $j=k+1,\ldots,n$}\right\},\notag\\
&\hat g(\beta) = \frac{1}{n}\sum^{n}_{i=1}g(\mathscr{Y}^{p}_i, \beta)
= \frac{1}{n} \sum_{i = 1}^n g_i(\beta),\notag\\
&\hat{g}_k^1(\beta) = \frac{1}{k}\sum_{i=1}^k g_i(\beta)\quad
\text{ and }\quad
\hat{g}^2_{n,k}(\beta) = \frac{1}{n-k}\sum_{j=k+1}^n g_j(\beta).\notag
\end{align}

\subsection{Proof of Theorem \ref{thm:1}}
We start proving several auxiliary results which are required in the proof of  Theorem \ref{thm:1}.

\begin{lemma}\label{lem:A1}
Suppose that Assumption \ref{ass:4} (i) holds.
For $1/\gamma<\zeta<1/2$, let
\[
\Lambda_{n, k} =\{ (\lambda, \eta)\in\R^{2 m} : \|\lambda\|\leq C{k}^{-\zeta},
\quad \|\eta\| \leq C(n - k)^{-\zeta}  \}.
\]
Then, as $n \to \infty$, we have
\[
\sup_{\beta\in{\cal B},\lambda\in\Lambda_{n, k^*}}\max_{1\leq i\leq k^*}|\lambda' g_i(\beta)| \plim 0,\quad
\sup_{\beta\in{\cal B},\eta\in\Lambda_{n, k^*}}\max_{k^*+1\leq j\leq n}|\eta' g_j(\beta)| \plim 0.
\]
Also, $\Lambda_{n, k^*} \subset \hat\Lambda^1_{k^*}(\beta) \times \hat\Lambda^2_{n, k^*}(\beta)$ for all $\beta\in{\cal B}$ w.p.a.1.
\end{lemma}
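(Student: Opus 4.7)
The proof is a direct application of a uniform (in $\beta$) size bound on the moment vector $g_i(\beta)$ followed by a crude $\gamma$-th moment maximal inequality. The central observation is that, because the factor in braces that multiplies $a^*(X_{i-1})$ in the definition of $g_i(\beta)$ takes values in $[-1/2, 1/2]$ regardless of $\beta$, the envelope
\[
\sup_{\beta\in\mathcal{B}}\|g_i(\beta)\|\leq \tfrac{1}{2}\|a^*(X_{i-1})\|
\]
holds deterministically. This collapses the $\beta$-supremum in the claim into one over a fixed random sequence $\|a^*(X_{i-1})\|$, and removes any need to argue via stochastic equicontinuity in $\beta$.

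The plan is then as follows. First, by Cauchy--Schwarz and the envelope bound, for any $\lambda\in\R^{m}$ and $\beta\in\mathcal{B}$,
\[
|\lambda' g_i(\beta)|\leq \|\lambda\|\,\|g_i(\beta)\|\leq \tfrac{1}{2}\|\lambda\|\,\|a^*(X_{i-1})\|.
\]
So the left-hand sup is bounded above by $\tfrac{1}{2}C(k^*)^{-\zeta}\max_{1\leq i\leq k^*}\|a^*(X_{i-1})\|$.

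Second, I control the maximum via Markov plus a union bound using only Assumption \ref{ass:4}(i) (no mixing needed for this step). For any $M>0$,
\[
P\!\left(\max_{1\leq i\leq k^*}\|a^*(X_{i-1})\|>M(k^*)^{1/\gamma}\right)\leq \sum_{i=1}^{k^*}\frac{\E[\|a^*(X_{i-1})\|^{\gamma}]}{M^{\gamma}k^*}= \frac{\E[\|a^*(X_{0})\|^{\gamma}]}{M^{\gamma}},
\]
which can be made arbitrarily small by choosing $M$ large. Hence $\max_{1\leq i\leq k^*}\|a^*(X_{i-1})\|=O_p((k^*)^{1/\gamma})$. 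Combining with the previous step,
\[
\sup_{\beta\in\mathcal{B},\,\lambda\in\Lambda_{n,k^*}}\max_{1\leq i\leq k^*}|\lambda' g_i(\beta)|=O_p\!\left((k^*)^{1/\gamma-\zeta}\right),
\]
and since $\zeta>1/\gamma$ together with $k^*=rn\to\infty$, this is $o_p(1)$, giving the first convergence. The second convergence follows verbatim after replacing $\lambda$ by $\eta$, $\Lambda_{n,k^*}$ by the corresponding set on the second block, and $k^*$ by $n-k^*=(1-r)n\to\infty$.

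Finally, for the inclusion $\Lambda_{n,k^*}\subset \hat\Lambda^1_{k^*}(\beta)\times\hat\Lambda^2_{n,k^*}(\beta)$, note that $\hat\Lambda^1_{k^*}(\beta)$ is precisely the set of $\lambda$ with $\max_{i\leq k^*}|\lambda' g_i(\beta)|<1$, and similarly for $\hat\Lambda^2_{n,k^*}(\beta)$. The two displays above show that the relevant maxima are uniformly $o_p(1)$ over $\beta\in\mathcal{B}$ and over $(\lambda,\eta)\in\Lambda_{n,k^*}$, hence in particular strictly less than $1$ on an event of probability tending to one, which yields the inclusion w.p.a.1.

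I do not expect any serious obstacle: the whole argument is a standard maximal-inequality computation, and the key simplification is that the envelope bound $\|g_i(\beta)\|\leq \tfrac12\|a^*(X_{i-1})\|$ is free of $\beta$, so the dependence structure of $\{X_{t-1}\}$ plays no role in this lemma (Assumption \ref{ass:4}(ii) will be needed only later, for CLT-type steps).
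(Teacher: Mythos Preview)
Your proof is correct and follows essentially the same route as the paper: both use the uniform envelope $\sup_{\beta}\|g_i(\beta)\|\leq \tfrac12\|a^*(X_{i-1})\|$ (the paper writes this as $b_i$), then a union-bound/Markov maximal inequality to get $\max_i b_i=O_p((k^*)^{1/\gamma})$, and finally Cauchy--Schwarz combined with $\zeta>1/\gamma$. Your write-up is slightly more explicit about why the envelope is $\beta$-free, but the argument is the same.
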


\begin{proof}
Let $b_i = \sup_{\beta\in{\cal B}}\|g_i(\beta)\|$.
By Assumption \ref{ass:4} (i),
we can choose $\gamma>2$ such that
$K=\E[b_1^\gamma]^{1/\gamma}$ is finite.
Then, for any $\delta>0$, we can define $M(\delta)=K/\delta^{1/{\gamma}}$ and obtain
\begin{align}
P\Big( \max_{1\leq i\leq k^*}b_i \geq M(\delta){k^*}^{1/\gamma} \Big)
 &\leq \sum_{i=1}^{k^*} P\left(b_i \geq M(\delta){k^*}^{1/\gamma} \right)
 = \sum_{i=1}^{k^*} P\left(b_i^\gamma \geq M(\delta)^\gamma {k^*} \right)\notag\\
 &\leq \sum_{i=1}^{k^*} \frac{\E[b_i^\gamma]}{M(\delta)^\gamma {k^*}} = \delta.\notag
\end{align}
Consequently, $\max_{i}b_i = O_p({k^*}^{1/\gamma})$ and by the
  Cauchy-Schwartz inequality we have
\begin{equation*}
\sup_{\beta\in{\cal B},\lambda\in\Lambda_{n, k^*}}
\max_{1\leq i\leq k^*}|\lambda' g_i(\beta)|
 \leq \sup_{\lambda\in\Lambda_{n, k^*}}
 \|\lambda\|\max_{1\leq i\leq k^*}b_i\notag =O_p({k^*}^{-\zeta+1/\gamma}),
\end{equation*}
which implies
\[
\sup_{\beta\in{\cal B},\lambda\in\Lambda_{n, k^*}}\max_{1\leq i\leq k^*}|\lambda' g_i(\beta)| \plim 0.
\]
Similarly, it follows that
\[
\sup_{\beta\in{\cal B},\eta\in\Lambda_{n, k^*}}\max_{k^*+1\leq j\leq n}|\eta' g_j(\beta)|\plim 0.
\]
Therefore, $\Lambda_{n, k^*} \subset \hat\Lambda^1_{k^*}(\beta) \times \hat\Lambda^2_{n, k^*}(\beta)$ for all $\beta\in{\cal B}$ w.p.a.1, which completes the proof of Lemma \ref{lem:A1}.
\end{proof}

\medskip

\begin{lemma}\label{lem:A2}
Suppose that Assumptions \ref{ass:1} -- \ref{ass:4} hold,
and there exists a sequence
$\{\overline{\beta}_{n, k^*}\}\subset{\cal B}$ such that
$$\overline{\beta}_{n, k^*} \plim \beta_0, \
 \hat{g}_{k^*}^1(\overline{\beta}_{n, k^*}) = O_p({k^*}^{-1/2}) \mbox{ and }
 \hat g_{n, k^*}^2(\overline{\beta}_{n, k^*}) = O_p((n-k^*)^{-1/2})$$
  as $n\to\infty$.
Denote $\overline{\beta}_{n, k^*}$ by $\overline \beta$.
Then, under $H_0$,
\[
\overline\lambda := \arg\max_{\lambda\in\hat\Lambda^1_{k^*}(\overline{\beta})}
\hat{P}_{k^*}^1(\overline{\beta},\lambda)\quad\text{ and }\quad
\overline\eta := \arg\max_{\eta\in\hat\Lambda^2_{n, k^*}(\overline{\beta})}
\hat P_{n, k^*}^2(\overline{\beta}, \eta)\notag
\]
exist w.p.a.1. Moreover, as $n\to \infty $ we have
\begin{eqnarray*}
\overline\lambda&=&O_p({k^*}^{-1/2}),~~\overline\eta = O_p((n-k^*)^{-1/2}), \\
\hat{P}_{k^*}^1(\overline{\beta},\overline\lambda) &=& O_p({k^*}^{-1}),~~
\hat P_{n, k^*}^2(\overline{\beta},\overline\eta) = O_p((n-k^*)^{-1}).
\end{eqnarray*}
\end{lemma}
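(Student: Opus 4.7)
My plan is to adapt the standard empirical likelihood argument of \cite{O:1988} and \cite{qin1994empirical}, carried out separately on the two blocks $\{1,\ldots,k^*\}$ and $\{k^*+1,\ldots,n\}$; by symmetry I describe only the argument for $\overline\lambda$.

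First I will establish existence of $\overline\lambda$. Because $\log(1-\cdot)$ is concave, $\hat P_{k^*}^1(\overline\beta,\cdot)$ is a concave function of $\lambda$ and, by Lemma \ref{lem:A1}, is continuous on the compact set $\Lambda_{n,k^*}$ w.p.a.1, hence attains a maximum $\overline\lambda$ there. To promote this to the maximizer on the full feasible set $\hat\Lambda_{k^*}^1(\overline\beta)$, I will show $\overline\lambda$ lies in the interior of $\Lambda_{n,k^*}$ w.p.a.1. Using the Taylor expansion $\log(1-x) = -x - x^2/2 + O(|x|^3)$, valid uniformly for $\lambda \in \Lambda_{n,k^*}$ by Lemma \ref{lem:A1},
\[
\hat P_{k^*}^1(\overline\beta,\lambda) = -\lambda' \hat g_{k^*}^1(\overline\beta) - \tfrac12 \lambda' \hat\Omega_{k^*}(\overline\beta)\lambda + R_n(\lambda),
\]
where $\hat\Omega_{k^*}(\beta) = {k^*}^{-1}\sum_{i=1}^{k^*} g_i(\beta) g_i(\beta)'$ and $|R_n(\lambda)| \leq C \|\lambda\|^3 \max_i \|g_i(\overline\beta)\|$. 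On the boundary $\|\lambda\| = C{k^*}^{-\zeta}$, the hypothesis on $\hat g_{k^*}^1(\overline\beta)$ makes the linear term $O_p({k^*}^{-\zeta-1/2})$, the quadratic term has magnitude of order ${k^*}^{-2\zeta}$ with negative sign (once I verify $\hat\Omega_{k^*}(\overline\beta) \plim \Omega$, which is positive definite by Assumption \ref{ass:3}), and the remainder is $O_p({k^*}^{-3\zeta+1/\gamma})$ via the envelope bound $\max_i \|g_i(\overline\beta)\| = O_p({k^*}^{1/\gamma})$ from the proof of Lemma \ref{lem:A1}. Because $1/\gamma < \zeta < 1/2$, the quadratic term dominates both perturbations, so $\hat P_{k^*}^1(\overline\beta,\lambda) < 0 = \hat P_{k^*}^1(\overline\beta,0)$ on the boundary, forcing $\overline\lambda$ into the interior of $\Lambda_{n,k^*}$.

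Next I will sharpen this to $\overline\lambda = O_p({k^*}^{-1/2})$ by the standard sandwich argument. Since $\hat P_{k^*}^1(\overline\beta,\overline\lambda) \geq \hat P_{k^*}^1(\overline\beta,0) = 0$, the same expansion at $\lambda = \overline\lambda$ gives
\[
\tfrac12 \overline\lambda' \hat\Omega_{k^*}(\overline\beta) \overline\lambda \leq \|\overline\lambda\| \cdot O_p({k^*}^{-1/2}) + O_p(\|\overline\lambda\|^3 {k^*}^{1/\gamma}),
\]
and the first-stage inclusion $\overline\lambda \in \Lambda_{n,k^*}$ shows the cubic term is $o_p(\|\overline\lambda\|^2)$. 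Positive-definiteness of $\hat\Omega_{k^*}(\overline\beta)$ then delivers $\|\overline\lambda\| = O_p({k^*}^{-1/2})$, and plugging this back into the expansion immediately yields $\hat P_{k^*}^1(\overline\beta,\overline\lambda) = O_p({k^*}^{-1})$ (the remainder being $O_p({k^*}^{-3/2+1/\gamma}) = o_p({k^*}^{-1})$ since $\gamma > 2$). The same argument applied to the block $\{k^*+1,\ldots,n\}$, using the second rate hypothesis $\hat g_{n,k^*}^2(\overline\beta) = O_p((n-k^*)^{-1/2})$, yields the analogous claims for $\overline\eta$.

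The main obstacle is verifying $\hat\Omega_{k^*}(\overline\beta) \plim \Omega$ with control uniform enough in the perturbation $\overline\beta - \beta_0$ to supply a positive lower bound on the smallest eigenvalue w.p.a.1. For $\hat\Omega_{k^*}(\beta_0)$ alone, an ergodic theorem applies: Assumption \ref{ass:1}(iii) together with the continuity at $0$ in Assumption \ref{ass:1}(iv) gives $g_i(\beta_0) = \tfrac12 \mathrm{sign}(e_i)\, a^*(X_{i-1}) = \tfrac12 v_i$ with $\{v_t\}$ strongly mixing under Assumption \ref{ass:4}(ii) and $\E\|v_t\|^2 < \infty$ via Assumption \ref{ass:2}. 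The transition from $\beta_0$ to $\overline\beta$ is then handled by a stochastic-equicontinuity estimate that uses the smoothness of $F$ at zero to control the fluctuations of the indicator ${\mathbb I}(y_t - X_{t-1}'\beta \leq 0)$ as $\beta$ varies in a shrinking neighborhood of $\beta_0$. Once this equicontinuity step is in place, the rest of the argument is routine bookkeeping.
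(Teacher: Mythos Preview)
Your argument follows the same standard empirical-likelihood route as the paper (concavity, restrict to a shrinking ball via Lemma~\ref{lem:A1}, Taylor-expand, use positive definiteness of the quadratic term to extract the rate, then identify the restricted maximizer with the unrestricted one by concavity). The one place you diverge is the ``main obstacle'' you flag at the end, and here you have manufactured a difficulty that does not exist. For this particular moment function,
\[
g_i(\beta) = \Big\{\tfrac12 - {\mathbb I}(y_i - X_{i-1}'\beta \leq 0)\Big\}\, a^*(X_{i-1}),
\]
and since $\{\tfrac12 - {\mathbb I}(\cdot)\}^2 \equiv \tfrac14$ identically, one has
\[
g_i(\beta)\, g_i(\beta)' = \tfrac14\, a^*(X_{i-1})\, a^*(X_{i-1})'
\]
for \emph{every} $\beta$. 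Thus your $\hat\Omega_{k^*}(\overline\beta)$ does not depend on $\overline\beta$ at all, and its convergence to $\Omega$ is immediate from the ergodic theorem applied to $\{a^*(X_{t-1})a^*(X_{t-1})'\}$; no stochastic-equicontinuity control of the indicator is needed. The paper exploits exactly this observation and obtains the positive-definite lower bound on the quadratic term in one line. Apart from this, the two proofs are essentially identical (the paper uses the exact second-order Taylor expansion with Lagrange remainder $\rho_i^1(\dot\lambda)=-1/(1-\dot\lambda'g_i)^2$ rather than your third-order expansion with cubic remainder, which is a cosmetic difference).
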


\begin{proof}
We only show the statement for $\overline \lambda$, the corresponding statement for $\overline \eta$ follows by similar arguments.
Since $\Lambda_{n, k^*}$ is a closed set, it follows that
\[
\check\lambda := \arg\max_{\lambda\in\Lambda_{n, k^*}}
\hat{P}_{k^*}^1(\overline\beta,\lambda)
\]
exists (note that $\hat P_{k^*}(\overline\beta, \lambda)$ is a concave function of $\lambda$). From Lemma \ref{lem:A1} it follows that $\hat{P}^1_{k^*}(\overline\beta,\lambda)$ is continuously
twice differentiable with respect to $\lambda$ w.p.a.1.
By a Taylor expansion at  $\lambda=0_m$,
there exists a point $\dot \lambda$ on the line joining $\check\lambda$ and $0_m$
such that
\begin{align}
0
 &= \hat{P}_{k^*}^1(\overline\beta,0_m)
 \leq \hat{P}_{k^*}^1(\overline \beta,\check\lambda)\notag\\
 &= -\check\lambda'\hat{g}_{k^*}^1(\overline\beta)
		+\frac{1}{2}\check\lambda'
		\Big[ \frac{1}{k^*}\sum^{k^*}_{i=1}\rho^1_{i}(\dot\lambda) g_i(\overline\beta) g_i(\overline\beta)' \Big]
			\check\lambda,\label{eq:A14}
\end{align}
where
$
\rho^1_{i}(\lambda) = -{1}/{(1-\lambda'g_i(\overline\beta))^2}
$.
Note that the  definition of $g_i(\beta)$ implies
\begin{align*}
g_i(\beta) g_i(\beta)'
 = \frac{1}{4}a^*(X_{i-1})a^*(X_{i-1})'
\end{align*}
for any $\beta \in \B$. By Lemma \ref{lem:A1}  we have $\rho^1_{i}(\dot\lambda)\geq -C$ uniformly with respect to
$i$ w.p.a.1.
Furthermore,  the ergodicity of $\{X_{t}:t\in\Z\}$ implies that the random variable
$$\hat\Omega^1_{k^*} := (4k^*)^{-1}\sum_{i=1}^{k^*} a^*(X_{i-1})a^*(X_{i-1})'$$
converges to $\Omega$ in probability.
Hence the minimum eigenvalue of $\hat\Omega^1_{k^*}$ is bounded away from 0 w.p.a.1.
and we obtain
 \begin{align} \label{eq:A13}
-\check\lambda'\hat{g}_{k^*}^1(\overline\beta)
		+\frac{1}{2}\check\lambda'
		\Big[ \frac{1}{k^*}\sum^{k^*}_{i=1} \rho_{i}^1(\dot{\lambda}) g_i(\overline\beta) g_i(\overline\beta)'\Big]
			\check\lambda
& \leq \|\check{\lambda}\|\, \|\hat{g}^1_{k^*}(\overline{\beta})\|
-
\frac{C}{2} \check{\lambda} \hat{\Omega}^1_{k^*} \check{\lambda} \nonumber\\
& \leq
\|\check \lambda\| \| \hat{g}^1_{k^*}(\overline{\beta})\| - C \|\check \lambda\|^2
\end{align}
w.p.a.1. Dividing both sides of  \eqref{eq:A13} by $\|\check\lambda\|$, we get
$$
 \|\check\lambda\| = O_p({k^*}^{-1/2})=o_p({k^*}^{-\zeta}),$$ and hence
$\check\lambda\in\Int(\hat\Lambda^1_{k^*})$ w.p.a.1.
Again by Lemma \ref{lem:A1}, the
 concavity of $\hat{P}_{k^*}^1(\overline\beta,\lambda)$
and the convexity of $\hat\Lambda^1_{k^*}(\overline\beta)$, it follows that $\overline\lambda
=\check\lambda$ exists w.p.a.1 and $\overline\lambda=O_p({k^*}^{-1/2})$.
These results also imply that
$\hat{P}_{k^*}^1(\overline\beta,\overline\lambda) = O_p({k^*}^{-1})$.
By   similar arguments,
we can show the corresponding results for  $\overline\eta$ and $\hat{P}_{n, k^*}^2(\overline\beta,\overline\eta)$.
\end{proof}

\medskip

Next, let us consider the estimator $\hat{\beta}_{n,k}$
of Theorem \ref{thm:1}. Recall that $\hat{\beta}_{n,k}$ is the minimizer of
\[
l_{n, k}(\beta, \beta)
=
k  \sup_{\lambda\in\hat\Lambda^1_{k }(\beta)}\hat P_k^1(\beta,\lambda)
+ (n - k ) \sup_{\eta\in\hat\Lambda^2_{n, k }(\beta)}\hat P^2_{n,k}(\beta,\eta).
\]
Let us define
\begin{equation} \label{eq:l5.2}
\hat P_{n, {k}}(\beta,\lambda,\eta):=
k \hat P_k^1(\beta,\lambda) + (n - {k}) \hat P^2_{n, k}(\beta,\eta)
\end{equation}
and
\begin{align} \label{eq:l5.3}
\hat\lambda_{n, k} := \arg\max_{\lambda\in\hat\Lambda^1_{k}(\hat{\beta}_{n,k})}
\hat{P}_k^1(\hat{\beta}_{n,k},\lambda),\quad
\hat\eta_{n, k} := \arg\max_{\eta\in\hat\Lambda^2_{n, k}(\hat{\beta}_{n,k})}
\hat P^2_{n, k}(\hat{\beta}_{n,k},\eta).
\end{align}

\begin{lemma}\label{lem:A3}
Suppose that Assumptions \ref{ass:1} -- \ref{ass:4} hold.
Then, under the null hypothesis $H_0$ of no change point we have
$$\hat{g}_{k^*}^1(\hat\beta_{n,k^*}) = O_p({k^*}^{-1/2}),~~
\hat g_{n, k^*}^2(\hat\beta_{n,k^*}) = O_p((n-{k^*})^{-1/2})
$$
as $n\to\infty$.
\end{lemma}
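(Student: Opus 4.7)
The plan is to combine the optimality of $\hat\beta_{n,k^*}$ with an $O_p(1)$ bound on $l_{n,k^*}(\beta_0,\beta_0)$ obtained from Lemma \ref{lem:A2}, and then to deduce $\|\hat g^1_{k^*}(\hat\beta_{n,k^*})\| = O_p(k^{*-1/2})$ via a two-step ``bootstrap'' argument — a preliminary slow rate is established first, and only then the sharp rate is obtained. Write $\hat\beta := \hat\beta_{n,k^*}$ for brevity.

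First, I would apply Lemma \ref{lem:A2} with $\overline\beta = \beta_0$. Under $H_0$ the identity $g_i(\beta_0) = (1/2)\,\textrm{sign}(e_i)\,a^*(X_{i-1}) = (1/2)v_i$ holds almost surely (using continuity of $F$ at $0$, Assumption \ref{ass:1}(iv)); the sequence is centered because $e_i$ has zero median and is independent of $X_{i-1}$, and it inherits strong mixing with the summability of Assumption \ref{ass:4}(ii). A Rosenthal-type inequality therefore yields $\hat g^1_{k^*}(\beta_0) = O_p(k^{*-1/2})$ and $\hat g^2_{n,k^*}(\beta_0) = O_p((n-k^*)^{-1/2})$, so the conclusion of Lemma \ref{lem:A2} gives $l_{n,k^*}(\beta_0,\beta_0) = O_p(1)$. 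Since $\hat\beta$ minimizes $l_{n,k^*}(\beta,\beta)$ and each supremum defining it is nonnegative (take $\lambda = 0_m$, $\eta = 0_m$), I obtain
\begin{equation*}
k^*\sup_{\lambda\in\hat\Lambda^1_{k^*}(\hat\beta)}\hat P^1_{k^*}(\hat\beta,\lambda) \leq l_{n,k^*}(\hat\beta,\hat\beta) \leq l_{n,k^*}(\beta_0,\beta_0) = O_p(1),
\end{equation*}
together with the analogous inequality for the $(n-k^*)$-piece.

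For the preliminary rate I fix $\zeta \in (1/\gamma, 1/2)$ and plug $\lambda^\sharp := -k^{*-\zeta}\hat g^1_{k^*}(\hat\beta)/\|\hat g^1_{k^*}(\hat\beta)\|$ into $\hat P^1_{k^*}(\hat\beta,\cdot)$; since $\|\lambda^\sharp\| = k^{*-\zeta}$, Lemma \ref{lem:A1} places it in $\hat\Lambda^1_{k^*}(\hat\beta)$ w.p.a.1. Using the identity $g_i(\beta)g_i(\beta)' = (1/4)a^*(X_{i-1})a^*(X_{i-1})'$ (independent of $\beta$), the convergence $\hat\Omega^1_{k^*}\plim\Omega$ together with Assumption \ref{ass:3}, and the uniform bound on the Hessian factor $1/(1-\dot\lambda' g_i(\hat\beta))^2$ on $\Lambda_{n,k^*}$ — exactly as in the proof of Lemma \ref{lem:A2} — the second-order Taylor expansion of $\hat P^1_{k^*}(\hat\beta,\cdot)$ around $0_m$ yields
\begin{equation*}
\hat P^1_{k^*}(\hat\beta,\lambda^\sharp) \geq k^{*-\zeta}\|\hat g^1_{k^*}(\hat\beta)\| - C k^{*-2\zeta}.
\end{equation*}
Multiplying by $k^*$ and using the previous display gives $\|\hat g^1_{k^*}(\hat\beta)\| = O_p(k^{*-\zeta})$. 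Combined with the bound $\max_{1\leq i\leq k^*}\|g_i(\hat\beta)\| = O_p(k^{*1/\gamma})$ from within the proof of Lemma \ref{lem:A1}, and the choice $\zeta > 1/\gamma$, this delivers $\|\hat g^1_{k^*}(\hat\beta)\|\cdot\max_i\|g_i(\hat\beta)\| = o_p(1)$.

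Finally, I bootstrap with $\lambda^\star := -(\hat\Omega^1_{k^*})^{-1}\hat g^1_{k^*}(\hat\beta)$. The preliminary rate makes $\|\lambda^\star\|\max_i\|g_i(\hat\beta)\| = o_p(1)$, so $\lambda^\star \in \hat\Lambda^1_{k^*}(\hat\beta)$ w.p.a.1 and every point $\dot\lambda$ on the segment from $0_m$ to $\lambda^\star$ satisfies $|\dot\lambda' g_i(\hat\beta)| = o_p(1)$ uniformly in $i$; consequently $1/(1-\dot\lambda' g_i(\hat\beta))^2 = 1 + o_p(1)$ uniformly, and the Taylor expansion reduces to
\begin{equation*}
\hat P^1_{k^*}(\hat\beta,\lambda^\star) = \bigl(\tfrac{1}{2}+o_p(1)\bigr)\,\hat g^1_{k^*}(\hat\beta)'(\hat\Omega^1_{k^*})^{-1}\hat g^1_{k^*}(\hat\beta) \geq c\|\hat g^1_{k^*}(\hat\beta)\|^2
\end{equation*}
for some $c>0$ w.p.a.1, where the last step uses Assumption \ref{ass:3}. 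Combined with the first display this yields $k^*\|\hat g^1_{k^*}(\hat\beta)\|^2 = O_p(1)$, that is $\hat g^1_{k^*}(\hat\beta) = O_p(k^{*-1/2})$; the argument for $\hat g^2_{n,k^*}(\hat\beta)$ is word-for-word identical with $(n-k^*)$ in place of $k^*$. The main obstacle is the chicken-and-egg feasibility issue: the quadratic-optimal choice $\lambda^\star$ can only be placed in $\hat\Lambda^1_{k^*}(\hat\beta)$ once a preliminary bound on $\|\hat g^1_{k^*}(\hat\beta)\|$ of order better than $k^{*-1/\gamma}$ is in hand, which is precisely what the two-step bootstrap provides.
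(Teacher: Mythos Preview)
Your argument is correct, but the two-step bootstrap is unnecessary: the paper obtains the sharp rate in a single step and the ``chicken-and-egg'' obstacle you identify does not actually arise. The paper's test point is $\tilde\lambda_{n,k^*} := -k^{*-1/2}\,\hat g^1_{k^*}(\hat\beta)/\|\hat g^1_{k^*}(\hat\beta)\|$, i.e.\ the same direction as your $\lambda^\sharp$ but with norm $k^{*-1/2}$ rather than $k^{*-\zeta}$. Since $\zeta<1/2$, this $\tilde\lambda$ already lies in $\Lambda_{n,k^*}$ (its norm is \emph{smaller} than $k^{*-\zeta}$), so Lemma~\ref{lem:A1} controls the Hessian factor without any prior information on $\|\hat g^1_{k^*}(\hat\beta)\|$. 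The Taylor expansion then gives
\[
k^*\hat P^1_{k^*}(\hat\beta,\tilde\lambda_{n,k^*}) \;\geq\; k^{*1/2}\|\hat g^1_{k^*}(\hat\beta)\| - C,
\]
because the quadratic term is $O(\|\tilde\lambda\|^2)=O(k^{*-1})$ and becomes $O(1)$ after multiplication by $k^*$; combined with the $O_p(1)$ upper bound this yields $\|\hat g^1_{k^*}(\hat\beta)\|=O_p(k^{*-1/2})$ directly. Your choice $\|\lambda^\sharp\|=k^{*-\zeta}$ makes the quadratic term $O(k^{*1-2\zeta})$ after scaling, which diverges and forces the weaker preliminary rate $O_p(k^{*-\zeta})$; the bootstrap then repairs this. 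Both routes are valid, but the paper's is shorter: the feasibility of the test point depends only on its norm, not on $\|\hat g^1_{k^*}(\hat\beta)\|$, so choosing the norm optimally ($k^{*-1/2}$) balances the linear and quadratic terms in one shot.
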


\begin{proof}
Define $\hat{\hat g}_{n,k}^l := \hat g^l(\hat{\beta}_{n,k})$ for $l = 1, 2$,
\begin{align}
\tilde\lambda_{n, k} := -{k}^{-1/2}{\hat{\hat g}^1_{n, k}}/{\|\hat{\hat g}^1_{n, k}\|},\quad
\tilde\eta_{n, k} := -(n - {k})^{-1/2}{\hat{\hat g}^2_{n, k}}/{\|\hat{\hat g}^2_{n, k}\|}\label{eq:A2},
\end{align} then it follows  from \eqref{eq:l5.3} that
\[
\hat P_k^1(\hat\beta_{n,k},\tilde\lambda_{n,k})
\leq \hat P_k^1(\hat\beta_{n,k},\hat\lambda_{n,k})
\quad\text{and}\quad
\hat P_{n,k}^2(\hat\beta_{n,k},\tilde\eta_{n,k})\leq
\hat P_{n,k}^2(\hat\beta_{n,k},\hat\eta_{n,k}),
\]
which implies the  inequality
\begin{align}
\hat P_{n,k}(\hat\beta_{n,k},\tilde\lambda_{n,k},\tilde\eta_{n,k})
\leq
\hat P_{n,k}(\hat\beta_{n,k},\hat\lambda_{n, k},\hat\eta_{n, k}).\label{eq:A4}
\end{align}
By  similar arguments as used in \eqref{eq:A14} and \eqref{eq:A13}  we have
\begin{align} \label{eq:A3}
\hat P_{n, {k^*}}(\hat{\beta}_{n, {k^*}},\tilde\lambda_{n, {k^*}},\tilde\eta_{n, {k^*}})
 \geq {k^* }^{1/2}\|\hat{\hat g}_{n, {k^*}}^1\| +(n-k^* )^{1/2}\|\hat{\hat g}_{n, {k^*}}^2\|-c_0
\end{align}
 w.p.a.1, where $c_0$ is the same constant as in the proof of
Lemma \ref{lem:A2}.
On the other hand, we have the following inequality:
\begin{align}
\hat P_{n,k}(\hat\beta_{n,k},\hat\lambda_{n,k},\hat\eta_{n,k})
 &= \inf_{\beta\in {\cal B}}
\sup_{\lambda\in\hat\Lambda_{k}^1(\hat\beta_{n,k}),\
\eta\in\hat\Lambda^2_{n,k}(\hat\beta_{n,k})}
\hat P_{n,k}(\beta,\lambda,\eta)\notag\\
 &\leq  \sup_{\lambda\in\hat\Lambda_k^1(\beta_0),\
\eta\in\hat\Lambda^2_{n,k}(\beta_0)}
\hat P_{n,k}(\beta_0,\lambda,\eta)\notag\\
 &\leq  k \sup_{\lambda\in\hat\Lambda_k^1(\beta_0)}
\hat P^1_k(\beta_0,\lambda)
+
(n-k )\sup_{\eta\in\hat\Lambda^2_{n,k}(\beta_0)}
\hat P_{n,k}^2(\beta_0,\eta).\label{eq:AA1}
\end{align}
Applying Lemma \ref{lem:A2} with $\overline\beta_{n, {k^*}} = \beta_0$ yields
\begin{align}
\sup_{\lambda\in\hat\Lambda_{k^*}^1(\beta_0)}
\hat P^1_{k^*}(\beta_0,\lambda) = O_p({k^*}^{-1}), \quad
\sup_{\eta\in\hat\Lambda^2_{n, {k^*}}(\beta_0)}
\hat P_{n, {k^*}}^2(\beta_0,\eta) = O_p((n- {k^*})^{-1}), \label{eq:AA2}
\end{align}
and from \eqref{eq:AA1} and \eqref{eq:AA2}, we get
\begin{align}
\hat P_{n, {k^*}}(\hat\beta_{n, {k^*}},\hat\lambda_{n, {k^*}},\hat\eta_{n, {k^*}})
&= O_p(1).\label{eq:A5}
\end{align}
Finally, from  \eqref{eq:A4}, \eqref{eq:A3} and \eqref{eq:A5},
we have
\begin{align*}
-c_0
\leq
-c_0 + {k^*}^{1/2}\|\hat{\hat g}_{n, {k^*}}^1\| +(n-{k^*} )^{1/2}\|\hat{\hat g}_{n, {k^*}}^2\| \leq \hat P_{n, {k^*}}(\hat\beta_{n, {k^*}},
\hat\lambda_{n, {k^*}},\hat\eta_{n, {k^*}}) = O_p(1),
\end{align*}
which implies
\[
\|\hat{\hat g}_{n, {k^*}}^1\| = \|\hat g_{k^*}^1(\hat{\beta}_{n, {k^*}})\| =
O_p({k^*}^{-1/2})\quad \text{and}\quad
\|\hat{\hat g}_{n, {k^*}}^2\| = \|\hat g_{n, {k^*}}^2(\hat{\beta}_{n, {k^*}})\| =
O_p((n-{k^*})^{-1/2}),
\]
establishing the assertion of Lemma \ref{lem:A3}.
\end{proof}

\medskip

\begin{proof}\renewcommand{\qedsymbol}{}[Proof of Theorem \ref{thm:1}] By Lemma \ref{lem:A3}  we have $\hat{g}(\hat{\beta}_{n, k^*}) = o_p(1)$.
Then, it follows from the triangular inequality and  uniform law of large numbers
that
\begin{align*}
\|g(\hat\beta_{n,{k^*}})\|
 &\leq \|g(\hat\beta_{n,{k^*}}) - \hat g(\hat\beta_{n,{k^*}})\| + \|\hat g(\hat\beta_{n,{k^*}})\|\notag\\
 &\leq \sup_{\beta\in{\cal B}}\|g(\beta) - \hat g(\beta)\| + \|\hat g(\hat\beta_{n,{k^*}})\|
 = o_p(1).
\end{align*}
Since $g(\beta)$ has a unique zero at $\beta_0$, the function $\|g(\beta)\|$ must be bounded
away from zero outside any neighborhood of $\beta_0$.
Therefore, $\hat\beta_{n, k^*}$ must be inside any neighborhood of $\beta_0$ w.p.a.1.\
and therefore, $\hat\beta_{n, k^*} \plim \beta_0$.

Next, we show that
$\hat\beta_{n,{k^*}}-\beta_0 = O_p(n^{-1/2})$.
As $k^* = rn$, by Lemma \ref{lem:A3}, we have
\begin{align*}
\hat g(\hat\beta_{n,{k^*}}) &= n^{-1}\left\{ {k^*} \hat g_{k^*}^1(\hat\beta_{n,{k^*}})
+(n-{k^*})\hat g_{n,{k^*}}^2(\hat\beta_{n,{k^*}})\right\}
 =O_p(n^{-1/2})
\end{align*}
and the   central limit theorem  implies
\[
\hat g(\beta_0) = O_p\left[n^{-1}\left\{{k^*}^{1/2} + (n-{k^*})^{1/2}\right\}\right]
=
O_p(n^{1/2}).
\]
Further,
\begin{align}
\|\hat g(\hat\beta_{n, k^*}) - \hat g(\beta_0) - g(\hat\beta_{n, k^*})\|\leq
(1+\sqrt{n}\|\hat\beta_{n, k^*}-\beta_0\|)o_p(n^{-1/2}),\label{eq:A7}
\end{align}
which yields
\begin{align*}
\|g(\hat\beta_{n, k^*})\|
 &\leq \| \hat g(\hat\beta_{n, k^*})-\hat g(\beta_0)-g(\hat\beta_{n, k^*}) \|
 + \|\hat g(\hat\beta_{n, k^*})\| + \|\hat g(\beta_0)\|\notag\\
 &= (1+\sqrt{n}\|\hat\beta_{n, k^*}-\beta_0\|)o_p(n^{-1/2}) +
 O_p\left[n^{-1}\left\{{k^*}^{1/2} + (n-{k^*})^{1/2}\right\}\right].
\end{align*}
Moreover,  similar arguments as given  in \cite{newey1994large} on page 2191,
 the differentiability of $\|g(\beta)\|$ and the estimate
$\|g(\hat\beta_n)\|\geq C\|\hat\beta_n-\beta_0\|$ w.p.a.1. show that
\[
\|\hat\beta_{n, k^*} - \beta_0\|
 = (1+\sqrt{n}\|\hat\beta_{n, k^*}-\beta_0\|)o_p(n^{-1/2}) +
O_p\left[n^{-1}\left\{{k^*}^{1/2} + (n-{k^*})^{1/2}\right\}\right],
\]
and hence
\begin{align}
\{1+o_p(1)\}\|\hat\beta_{n, k^*}-\beta_0\| = o_p(n^{-1/2})
+O_p\left[n^{-1}\left\{{k^*}^{1/2} + (n-{k^*})^{1/2}\right\}\right].
\label{eq:AA4}
\end{align}
If $k^*=r n$ the right-hand side of
\eqref{eq:AA4} is of order $O_p(n^{-1/2})$, which
  completes the proof of Theorem \ref{thm:1}.
\end{proof}

\subsection{Proof of Theorem \ref{thm:2}}
We first show that $\hat P_{n, k^*}(\beta,\lambda,\eta)$ in \eqref{eq:l5.2} is well approximated by some function
near its optima using a similar reasoning as in \cite{parente2011gel}.
For this purpose let us define
\begin{align}
&\hat L_k^1(\beta,\lambda)
 = \{-G(\beta-\beta_0) - \hat g_k^1(\beta_0) \}'\lambda
 - \frac{1}{2}\lambda'\Omega\lambda,\notag\\
&\hat L^2_{n,k}(\beta,\eta)
 = \{-G(\beta-\beta_0) - \hat g_{n,k}^2(\beta_0) \}'\eta
 - \frac{1}{2}\eta'\Omega\eta\notag
\end{align}
and
\[
\hat L_{n,k}(\beta,\lambda,\eta):=
k  \hat L^1_k(\beta,\lambda) + (n-k ) \hat L_{n,k}^2(\beta,\eta).\notag
\]
Furthermore, hereafter redefine
\begin{align}
&\tilde\beta_{n,k} := \arg\min_{\beta\in{\cal B}}\sup_{\lambda\in\R^m,\eta\in\R^m}
\hat L_{n,k}(\beta,\lambda,\eta),\notag\\
&\tilde\lambda_{n,k} := \arg\max_{\lambda\in\R^m}
\hat L_k^1(\tilde\beta,\lambda)\quad\text{ and }\quad
\tilde\eta_{n,k} := \arg\max_{\eta\in\R^m}
\hat L^2_{n,k}(\tilde\beta,\eta).\notag
\end{align}

\begin{lemma}\label{lem:A4}
Suppose that Assumptions \ref{ass:1}-\ref{ass:4} hold.
Then, under $H_0$,
$$\hat P_{n, k^*}(\hat\beta_{n,{k^*}},\hat\lambda_{n,{k^*}},\hat\eta_{n,{k^*}})
 = \hat L_{n, k^*}(\tilde\beta_{n,{k^*}},\tilde\lambda_{n,{k^*}},\tilde\eta_{n,{k^*}}) + o_p(1)$$
as $n \to \infty$.
\end{lemma}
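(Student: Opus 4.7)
The plan is to establish the claimed approximation by a localized quadratic expansion of the empirical-likelihood objective, followed by a standard argmin/argmax sandwich that transfers uniform closeness of the two objective functions into closeness of their optimal values. First I would localize the problem. From Theorem~\ref{thm:1} and Lemmas~\ref{lem:A2}--\ref{lem:A3} we already have $\hat\beta_{n,k^*}-\beta_0=O_p(n^{-1/2})$, $\hat\lambda_{n,k^*}=O_p(n^{-1/2})$ and $\hat\eta_{n,k^*}=O_p(n^{-1/2})$ since $k^*=rn$. A parallel analysis applies to the tilde estimators: $\hat L_{n,k^*}$ is strictly concave in $(\lambda,\eta)$ with Hessian $-\Omega$ positive definite (Assumption~\ref{ass:3}), so closed-form maximization yields $\tilde\lambda_{n,k^*}=-\Omega^{-1}\{G(\tilde\beta_{n,k^*}-\beta_0)+\hat g^1_{k^*}(\beta_0)\}$, and since $\hat g^1_{k^*}(\beta_0)=O_p(n^{-1/2})$ by the CLT one obtains the same $n^{-1/2}$ rate for $(\tilde\beta_{n,k^*},\tilde\lambda_{n,k^*},\tilde\eta_{n,k^*})$. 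The entire analysis can then be confined to the shrinking neighborhood $\mathcal{N}_n=\{(\beta,\lambda,\eta):\|\beta-\beta_0\|+\|\lambda\|+\|\eta\|=O(n^{-1/2})\}$.

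Next I would Taylor expand $\log(1-\lambda'g_i(\beta))=-\lambda'g_i(\beta)-\tfrac12(\lambda'g_i(\beta))^2+O(|\lambda'g_i(\beta)|^3)$, valid because $\max_i|\lambda'g_i(\beta)|=o_p(1)$ uniformly on $\mathcal{N}_n$ by Lemma~\ref{lem:A1}. The identity $g_i(\beta)g_i(\beta)'=\tfrac14 a^*(X_{i-1})a^*(X_{i-1})'$ already exploited in Lemma~\ref{lem:A2} removes the $\beta$-dependence from the quadratic term, and ergodicity together with Assumption~\ref{ass:4}(i) gives $\hat\Omega^1_{k^*}\to\Omega$ in probability, so $\tfrac{k^*}{2}\lambda'\hat\Omega^1_{k^*}\lambda=\tfrac{k^*}{2}\lambda'\Omega\lambda+o_p(1)$ uniformly on $\mathcal{N}_n$. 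The cubic remainder is controlled by $k^*\|\lambda\|^3\cdot\max_i\|g_i\|\cdot\tfrac{1}{k^*}\sum_i\|g_i\|^2=O_p(n\cdot n^{-3/2}\cdot n^{1/\gamma})=o_p(1)$ since $\gamma>2$. Combining with the symmetric expansion for the $j>k^*$ block, it remains to establish the stochastic linearization
\begin{equation*}
\hat g^1_{k^*}(\beta)=\hat g^1_{k^*}(\beta_0)+G(\beta-\beta_0)+r_{n,k^*}(\beta),\qquad \sup_{\|\beta-\beta_0\|\leq Cn^{-1/2}}\|r_{n,k^*}(\beta)\|=o_p(n^{-1/2}),
\end{equation*}
(and its analogue for $\hat g^2_{n,k^*}$). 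Multiplying by $k^*$ and using $\|\lambda\|=O_p(n^{-1/2})$ then converts $-k^*\lambda'\hat g^1_{k^*}(\beta)$ into $k^*\lambda'\{-G(\beta-\beta_0)-\hat g^1_{k^*}(\beta_0)\}+o_p(1)$, which is exactly the linear part of $k^*\hat L^1_{k^*}$. Adding all four pieces yields $\hat P_{n,k^*}(\beta,\lambda,\eta)=\hat L_{n,k^*}(\beta,\lambda,\eta)+o_p(1)$ uniformly over $\mathcal{N}_n$, and since both optima live inside $\mathcal{N}_n$ and both objectives are concave in $(\lambda,\eta)$, the saddle-point values differ by $o_p(1)$.

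The main obstacle is the stochastic linearization above, since $g_i(\beta)$ contains the non-smooth indicator $\mathbb{I}(y_t-X_{t-1}'\beta\leq 0)$ and no pointwise Taylor expansion in $\beta$ is available. One must instead appeal to a uniform Bahadur-type representation. The mean function $g(\beta)=\E[(\tfrac12-F(X_{t-1}'(\beta-\beta_0)))a^*(X_{t-1})]$ is differentiable by Assumption~\ref{ass:1}(iv) with derivative $G=-f(0)\E[X_{t-1}a^*(X_{t-1})']$, so the deterministic part of the remainder is $o(n^{-1/2})$ on an $n^{-1/2}$-neighborhood of $\beta_0$; the stochastic part $k^{*1/2}\{\hat g^1_{k^*}(\beta)-\hat g^1_{k^*}(\beta_0)-(g(\beta)-g(\beta_0))\}$ must be shown to be $o_p(1)$ uniformly over the shrinking neighborhood by a maximal inequality for strong-mixing sequences (Assumption~\ref{ass:4}(ii)) applied to a bracketing class of half-space indicators with envelope controlled by the moment bound in Assumption~\ref{ass:4}(i). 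This stochastic equicontinuity step is where the full strength of Assumption~\ref{ass:4} is invoked and constitutes the technical heart of the proof.
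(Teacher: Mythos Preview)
Your proposal is correct and follows essentially the same approach as the paper: a second-order Taylor expansion of $\hat P$ combined with the stochastic linearization $\hat g^1_{k^*}(\beta)=\hat g^1_{k^*}(\beta_0)+G(\beta-\beta_0)+o_p(n^{-1/2})$ (the paper's inequality \eqref{eq:A7}), followed by a saddle-point sandwich. The only organizational difference is that you argue via a uniform $o_p(1)$ approximation over an $n^{-1/2}$-neighborhood and then transfer, whereas the paper breaks the sandwich into three explicit pointwise comparisons (i)--(iii) at the hat and tilde estimators; the ingredients, the rates, and your identification of the Bahadur-type linearization of the indicator moments as the technical crux are identical.
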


\begin{proof}
It is sufficient to show that
\begin{enumerate}[label=(\roman*)]
\item $\hat P_{n,{k^*}}(\hat\beta_{n,{k^*}},\hat\lambda_{n,{k^*}},\hat\eta_{n,{k^*}})
- \hat L_{n,{k^*}}(\hat\beta_{n,{k^*}},\hat\lambda_{n,{k^*}},\hat\eta_{n,{k^*}})=o_p(1),$
\item $\hat L_{n,{k^*}}(\hat\beta_{n,{k^*}},\hat\lambda_{n,{k^*}},\hat\eta_{n,{k^*}})
- \hat L_{n,{k^*}}(\tilde\beta_{n,{k^*}},\hat\lambda_{n,{k^*}},\hat\eta_{n,{k^*}})=o_p(1),$
\item $\hat L_{n,{k^*}}(\tilde\beta_{n,{k^*}},\hat\lambda_{n,{k^*}},\hat\eta_{n,{k^*}})
- \hat L_{n,{k^*}}(\tilde\beta_{n,{k^*}},\tilde\lambda_{n,{k^*}},\tilde\eta_{n,{k^*}})=o_p(1).$
\end{enumerate}
For a proof of (i)  we note that a Taylor expansion leads to
\begin{align}
\hat{P}_k^1(\hat\beta_{n, k},\hat\lambda_{n, k})
 &=
-\hat\lambda_{n, k}' \hat{\hat g}^1_{n, k}
+\frac{1}{2}\hat\lambda'_{n, k}
\Big[ \frac{1}{k}\sum^{k}_{i=1}\rho_{i}^1(\ddot\lambda) a^*(X_{i-1})a^*(X_{i-1})'\Big]
\hat\lambda_{n, k},\notag
\end{align}
where $\ddot \lambda$ is on the line joining the points $\hat\lambda_{n, k}$ and $0_m$.
Observing the definition of $\hat L^1(\hat\beta_{n,k}, \hat\lambda_{n,k})$ this yields the estimate
\begin{eqnarray}
\left|\hat{P}_k^1(\hat\beta_{n, k},\hat\lambda_{n, k})
- \hat L^1(\hat\beta_{n, k},\hat\lambda_{n, k})\right|
 &\leq & \Big  | -\left( \hat{\hat g}^1_{n, k} - \hat{g}_k^1(\beta_0)
	- G(\hat\beta-\beta_0) \right)'\hat\lambda_{n, k} \Big | \label{eq:A6.1}\\  \nonumber
 & + & \Big|\frac{1}{2}\hat\lambda'_{n, k}
	\Big[\frac{1}{k}\sum^{k}_{i=1}\dot \rho^1_{i} a^*(X_{i-1})a^*(X_{i-1})' + \Omega
	\Big]\hat\lambda_{n, k}
	\Big|. \label{eq:A6.2}
\end{eqnarray}
Since $\hat\beta_{n,{k^*}}\plim\beta_0$ by Theorem \ref{thm:1},
we can take $\overline\beta_{n, k^*}=\hat\beta_{n,{k^*}}$ in Lemma \ref{lem:A2}, and obtain $\hat\lambda_{n,{k^*}}=O_p(n^{-1/2})$.
Then, recalling \eqref{eq:A7}, the first term in \eqref{eq:A6.1} (where $k$ is replaced by $k^*$) becomes
\begin{align*}
&\Big | -\left( \hat{\hat g}^1_{n, {k^*}} - \hat{g}_{k^*}^1(\beta_0)
	- G(\hat\beta_{n, k^*}-\beta_0) \right)'\hat\lambda_{n, k^*} \Big |\\
\leq&
\left\{
\left\|\hat{\hat g}^1_{n, {k^*}} - \hat{g}_{k^*}^1(\beta_0) - g(\hat\beta_{n, k^*})\right\|
+\left\| g(\hat\beta_{n, k^*}) - G(\hat\beta_{n, k^*} - \beta_0) \right\|
\right\}\left\| \hat\lambda_{n, k^*} \right\|\notag\\
=&
\Big \{
\left(1+\sqrt{n}\left\|\hat\beta_{n, k^*}-\beta_0\right\|\right)o_p(n^{-1/2})
+ O_p\Big(\left\| \hat\beta_{n, k^*} - \beta_0 \right\|^2\Big)
\Big \} O_p(n^{-1/2})\notag\\
=&
o_p(n^{-1}).
\end{align*}
Moreover,  the second term in \eqref{eq:A6.1} is of order $o_p({k^*}^{-1})$.
Hence, we get
$$
\big|\hat{P}_{k^*}^1(\hat\beta_{n, k^*},\hat\lambda_{n, k^*})
- \hat L^1(\hat\beta_{n, k}, \hat\lambda_{n, k^*})\big | = o_p({k^*}^{-1})$$
  and similarly
$$
\big |\hat P^2_{n, k^*}(\hat\beta_{n, k^*},\hat\eta_{n, k^*})
- \hat L^2(\hat\beta_{n, k^*},\hat\eta_{n, k^*})\big | = o_p((n-k^*)^{-1}).$$
Combining these estimates yields
\begin{align*}
\hat P_{n, k^*}(\hat\beta_{n, k^*},\hat\lambda_{n, k^*},\hat\eta_{n, k^*})
- \hat L_{n, k^*}(\hat\beta_{n, k^*},\hat\lambda_{n, k^*},\hat\eta_{n, k^*})=o_p(1),
\end{align*}
which is the statement (i).

For a proof of (ii)  we first show
$$ \big | \hat P_{n,{k^*}}(\tilde\beta_{n,{k^*}},\hat\lambda_{n,{k^*}},\hat\eta_{n,{k^*}})
-\hat L_{n,{k^*}}(\tilde\beta_{n,{k^*}},\hat\lambda_{n,{k^*}},\hat\eta_{n,{k^*}}) \big | = o_p(1).$$
Note that the function $\hat L_{n,k}(\beta,\lambda,\eta)$ is smooth in $\beta$, $\lambda$
and $\eta$.
Then, the first order conditions for an interior global maximum
\begin{align}
&0_p = \frac{\partial\hat L_{n,k}(\beta,\lambda,\eta)}{\partial\beta}
 =-G'\left\{k \lambda + (n-k )\eta \right\}
,\notag\\
&0_m = \frac{\partial\hat L_{n,k}(\beta,\lambda,\eta)}{\partial\lambda}
= -k  \left\{ G(\beta-\beta_0) +\hat g_k^1(\beta_0) + \Omega\lambda \right\}
\notag , \\
&0_m = \frac{\partial\hat L_{n,k}(\beta,\lambda,\eta)}{\partial\eta}
= -(n-k)\left\{ G(\beta-\beta_0) +\hat g_{n,k}^2(\beta_0) + \Omega\eta \right\}
\notag
\end{align}
 are satisfied for the point $(\beta',\lambda',\eta') = (\tilde\beta'_{n, k},\tilde\lambda'_{n, k},\tilde\eta'_{n, k})$.
These conditions can be rewritten in matrix form as
\begin{align}
\left(
\begin{array}{lll}
O_{p\times p}& G' & G'\\
G & k^{-1}\Omega & O_{m\times m}\\
G & O_{m\times m} & (n-k)^{-1}\Omega
\end{array}
\right)
\left(
\begin{array}{c}
\tilde\beta_{n,k}-\beta_0\\
k\tilde\lambda_{n,k}\\
(n-k)\tilde\eta_{n,k}
\end{array}
\right)
+
\left(
\begin{array}{c}
0_p\\
\hat g_k^1(\beta_0)\\
\hat g_{n,k}^2(\beta_0)
\end{array}
\right) = 0_{p+2m}.\label{eq:A8}
\end{align}
With the notations
\begin{eqnarray*}
 \Sigma &:=& (G'\Omega^{-1}G)^{-1}, \quad
 H:=\Omega^{-1}G\Sigma, \\
 P_k^1 &:=&\Omega^{-1}-\frac{k}{n}H\Sigma^{-1}H', \quad
 P_{n,k}^2:=\Omega^{-1}-\frac{n-k}{n}H\Sigma^{-1}H',
\end{eqnarray*}
the system \eqref{eq:A8} is equivalent to
\begin{align}\nonumber
& \left(
\begin{array}{c}
\tilde\beta_{n, k}-\beta_0\\
{k}\tilde\lambda_{n, k}\\
(n - k)\tilde\eta_{n, k}
\end{array}
\right) \\
& =
\left(
\begin{array}{lll}
 n^{-1}\Sigma & -{k}n^{-1}H' & -(n-{k})n^{-1}H' \\
-{k}n^{-1}H & -{k}P^1_k & {k}(n-{k})n^{-1}H\Sigma^{-1}H' \\
-(n-k)n^{-1}H & {k}(n-{k})n^{-1}H\Sigma^{-1}H' & -(n- k)P^2_{n, k}
\end{array}
\right)
\left(
\begin{array}{c}
0_p\\
\hat{g}_k^1(\beta_0)\\
\hat g_{n, k}^2(\beta_0)
\end{array}
\right)\notag\\
&= \left(
\begin{array}{c}
-H' \hat g(\beta_0) \\
-k\left\{\Omega^{-1} \hat{g}_k^1(\beta_0) - H\Sigma^{-1}H'\hat g(\beta_0)\right\}\\
-(n-k)\left\{\Omega^{-1} \hat g_{n, k}^2(\beta_0) - H\Sigma^{-1}H'\hat g(\beta_0)\right\}
\end{array}
\right). \label{eq:A9}
\end{align}
Consequently, $\tilde\beta_{n, k^*} - \beta_0$, $\tilde\lambda_{n, k^*}$ and $\tilde\eta_{n, k^*}$ are of order
$O_p(n^{-1/2})$, $O_p({k^*}^{-1/2})$ and $O_p((n-{k^*})^{-1/2})$, respectively.
Therefore, by the same arguments as given in the proof of (i), it follows that
\[
| \hat P_{n,{k^*}}(\tilde\beta_{n,{k^*}},\hat\lambda_{n,{k^*}},\hat\eta_{n,{k^*}})
-\hat L_{n,{k^*}}(\tilde\beta_{n,{k^*}},\hat\lambda_{n,{k^*}},\hat\eta_{n,{k^*}})| = o_p(1).
\]
This relationship and the fact that $(\hat\beta_{n,{k^*}}',\hat\lambda_{n,{k^*}}',\hat\eta_{n,{k^*}}')'$ and
$(\tilde\beta_{n,{k^*}}',\tilde\lambda_{n,{k^*}}',\tilde\eta_{n,{k^*}}')'$ are   the saddle points of the functions
$\hat P_{n,{k^*}}(\beta,\lambda,\eta)$ and $\hat L_{n,{k^*}}(\beta,\lambda,\eta)$, respectively,
imply that
\begin{align}
\hat L_{n,{k^*}}(\hat\beta_{n,{k^*}},\hat\lambda_{n,{k^*}},\hat\eta_{n,{k^*}})
 &= \hat P_{n,{k^*}}(\hat\beta_{n,{k^*}},\hat\lambda_{n,{k^*}},\hat\eta_{n,{k^*}}) + o_p(1)\notag\\
 &\leq \hat P_{n,{k^*}}(\tilde\beta_{n,{k^*}},\hat\lambda_{n,{k^*}},\hat\eta_{n,{k^*}}) + o_p(1)\notag\\
 &= \hat L_{n,{k^*}}(\tilde\beta_{n,{k^*}},\hat\lambda_{n,{k^*}},\hat\eta_{n,{k^*}}) + o_p(1).\label{eq:A10}
\end{align}
On the other hand,
\begin{align}
\hat L_{n,{k^*}}(\tilde\beta_{n,{k^*}},\hat\lambda_{n,{k^*}},\hat\eta_{n,{k^*}})
&\leq \hat L_{n,{k^*}}(\tilde\beta_{n,{k^*}},\tilde\lambda_{n,{k^*}},\tilde\eta_{n,{k^*}})\notag\\
&\leq \hat L_{n,{k^*}}(\hat\beta_{n,{k^*}},\tilde\lambda_{n,{k^*}},\tilde\eta_{n,{k^*}})\notag\\
&= \hat P_{n,{k^*}}(\hat\beta_{n,{k^*}},\tilde\lambda_{n,{k^*}},\tilde\eta_{n,{k^*}}) + o_p(1)\notag\\
&\leq \hat P_{n,{k^*}}(\hat\beta_{n,{k^*}},\hat\lambda_{n,{k^*}},\hat\eta_{n,{k^*}}) + o_p(1)\notag\\
&= \hat L_{n,{k^*}}(\hat\beta_{n,{k^*}},\hat\lambda_{n,{k^*}},\hat\eta_{n,{k^*}}) + o_p(1).\label{eq:A11}
\end{align}
Thus, \eqref{eq:A10} and \eqref{eq:A11} lead to
$$\hat L_{n,{k^*}}(\hat\beta_{n,{k^*}},\hat\lambda_{n,{k^*}},\hat\eta_{n,{k^*}})
-\hat L_{n,{k^*}}(\tilde\beta_{n,{k^*}},\hat\lambda_{n,{k^*}},\hat\eta_{n,{k^*}})=o_p(1).$$

Finally, we can prove (iii) by similar arguments  that
\begin{align*}
\hat L_{n,{k^*}}(\tilde\beta_{n,{k^*}},\tilde\lambda_{n,{k^*}},\tilde\eta_{n,{k^*}})
&\leq \hat L_{n,{k^*}}(\hat\beta_{n,{k^*}},\tilde\lambda_{n,{k^*}},\tilde\eta_{n,{k^*}})\notag\\
&= \hat P_{n,{k^*}}(\hat\beta_{n,{k^*}},\tilde\lambda_{n,{k^*}},\tilde\eta_{n,{k^*}}) + o_p(1)\notag\\
&\leq \hat P_{n,{k^*}}(\hat\beta_{n,{k^*}},\hat\lambda_{n,{k^*}},\hat\eta_{n,{k^*}}) + o_p(1)\notag\\
&\leq \hat P_{n,{k^*}}(\tilde\beta_{n,{k^*}},\hat\lambda_{n,{k^*}},\hat\eta_{n,{k^*}}) + o_p(1)\notag\\
&= \hat L_{n,{k^*}}(\tilde\beta_{n,{k^*}},\hat\lambda_{n,{k^*}},\hat\eta_{n,{k^*}}) + o_p(1)
\end{align*}
and
\begin{align*}
\hat L_{n,{k^*}}(\tilde\beta_{n,{k^*}},\hat\lambda_{n,{k^*}},\hat\eta_{n,{k^*}})\leq
\hat L_{n,{k^*}}(\tilde\beta_{n,{k^*}},\tilde\lambda_{n,{k^*}},\tilde\eta_{n,{k^*}}).
\end{align*}
Consequently, $\hat L_{n,{k^*}}(\tilde\beta_{n,{k^*}},\hat\lambda_{n,{k^*}},\hat\eta_{n,{k^*}})=
\hat L_{n,{k^*}}(\tilde\beta_{n,{k^*}},\tilde\lambda_{n,{k^*}},\tilde\eta_{n,{k^*}})+o_p(1)$, which implies (iii).\end{proof}

\begin{proof}\renewcommand{\qedsymbol}{}[Proof of Theorem \ref{thm:2}]
By \eqref{eq:l5.2}, \eqref{eq:l5.3}, Lemma \ref{lem:A4} and \eqref{eq:A9} it follows that
\begin{align}
\sup_{\beta\in{\cal B}}\{-l_{n, {k^*}}(\beta,\beta)\}
 &= \hat P_{n,{k^*}}(\hat\beta_{n,{k^*}},\hat\lambda_{n,{k^*}},\hat\eta_{n,{k^*}})\notag\\
 &=
\hat L_{n,{k^*}}(\tilde\beta_{n,{k^*}},\tilde\lambda_{n,{k^*}},\tilde\eta_{n,{k^*}}) + R_{n,{k^*}}\notag\\
 &=
\frac{{k^*}}{2}\tilde\lambda_{n,{k^*}}' \Omega \tilde\lambda_{n,{k^*}}
+ \frac{n-{k^*}}{2}\tilde\eta_{n,{k^*}}' \Omega \tilde\eta_{n,{k^*}} + R_{n,{k^*}} + o_p(1)\notag\\
 &=
\frac{{k^*}}{2}\hat g_{k^*}^1(\beta_0)' \Omega^{-1}\hat g_{k^*}^1(\beta_0)
+ \frac{n-{k^*}}{2}\hat g_{n,{k^*}}^2(\beta_0)' \Omega^{-1}\hat g_{n,{k^*}}^2(\beta_0) \notag\\
& \quad -
\frac{n}{2}\hat g(\beta_0)' H\Sigma^{-1}H'\hat g(\beta_0)
 + R_{n, {k^*}}+ o_p(1)\notag\\
 &= \hat M_{n, {k^*}} + R_{n, {k^*}}+ o_p(1),\label{eq:A12}
\end{align}
where
\begin{align*}
& \hat M_{n, k}
 = \frac{\big\| \hat W_n(k/n) - (k/n)\hat W_n(1) \big\|^2}{2\phi(k/n)}
+ \frac{\hat W_n(1)'Q\hat W_n(1)}{2},   \\
&\hat W_n(r) = \frac{1}{\sqrt{n}}\sum^{[r n]}_{t=1}\Omega^{-1/2}g(\mathscr{Y}^{p}_t, \beta_0),\notag\\
&R_{n, k} = \hat P_{n, k}(\hat\beta_{n, k},\hat\lambda_{n, k},\hat\eta_{n, k})-
\hat L_{n, k}(\tilde\beta_{n, k},\tilde\lambda_{n, k},\tilde\eta_{n, k}),
\end{align*}
$\phi(u) = u(1- u)$
and $[x]$ denotes the integer part of real number $x$.
As shown in Lemma \ref{lem:A4},
\[
\max_{k_{1n}\leq k^*\leq k_{2n}} |R_{n, k^*}|
=
\sup_{r_1 \leq r \leq r_2} |R_{n, rn}|
=
o_p(1).
\]

Second, from Assumption \ref{ass:4}
and Lemma 2.2 in \cite{phillips1987time},
it follows that
\begin{align*}
\left\{c'\hat W_{n}(r):r\in[0,1]\right\} \dlim \left\{c' B(r):r\in[0,1]\right\},
\end{align*}
for any vector $c\in\R^m$,
where $\{B(r):r\in[0,1]\}$ is an $m$-dimensional standard Brownian motion.
Hence, the Cram{\'e}r-Wold device and the continuous mapping theorem lead to
\begin{align*}
\tilde{T}_n
&= 2 \max_{k_{1n}\leq k \leq k_{2n}}\Big\{h\Bigl(\frac{k}{n} \Bigr)
\hat M_{n, k}\Big\} \\
&~~~~~~~~~
=\sup_{k_{1n}/n\leq r \leq k_{2n}/n}
\Big\{
	\frac{h(k/n)}{\phi(k/n)}\big\| \hat W_n(r) - ([rn]/n)\hat W_n(1) \big\|^2  +h([rn]/n)
	\hat W_n(1)'Q\hat W_n(1)
	\Big\}\notag\\
&~~~~~~~~~\dlim
\sup_{r_1\leq r\leq r_2}
\Big\{\frac{h(r)}{\phi(r)}\left\| B(r)- r  B(1)\right\|^2+h(r) B(1)'Q  B(1)\Big\}.\notag
\end{align*}
\end{proof}

\subsection{Proof of Theorem \ref{thm:3}}
\begin{proof}\renewcommand{\qedsymbol}{}
Without loss of generality, suppose that $\theta_2 \not = \beta_0$.
This implies that there exist a neighborhood $U(\beta_0)$ of $\beta_0$
and a neighborhood $U(\theta_2)$ of $\theta_2$ such that
$$U(\beta_0) \cap U(\theta_2) = \emptyset.$$
Under the alternative  it follows that $\hat{\beta}_{n, k^*} \not \in U(\beta_0)$
or $\hat{\beta}_{n, k^*} \not \in U(\theta_2)$.
Note that
$\E[g({\cal Y}^p_t,\theta_2)]\neq0$ for $1\leq t\leq k^*$ and $\E[g({\cal Y}^p_t,\beta_0)]\neq0$ for $k^*+1\leq t\leq n$.
 From a uniform law of large numbers, $\hat{g}^1_{k^*}(\hat{\beta}_{n, k^*})$
or $\hat{g}^2_{n, k^*}(\hat{\beta}_{n, k^*})$ is outside a neighborhood of 0
for any sufficiently large   $n$.

Now, if we consider $\hat{g}_k^1(\hat{\beta}_{n, k^*})$ instead of $\hat{g}_k^1(\beta_0)$
and $\hat{g}_{n, k}^2(\hat{\beta}_{n, k^*})$ instead of $\hat{g}_{n, k}^2(\beta_0)$ in \eqref{eq:A8},
we find, as in \eqref{eq:A12}, that $\sup_{\beta \in \B} \{-l_{n, k^*}(\beta, \beta)\}$ can be approximated by
\begin{multline*}
\frac{{k^*}}{2}\hat g_{k^*}^1(\hat{\beta}_{n, k^*})' \Omega^{-1}\hat g_{k^*}^1(\hat{\beta}_{n, k^*})
+ \frac{n-{k^*}}{2}\hat g_{n,{k^*}}^2(\hat{\beta}_{n, k^*})' \Omega^{-1}\hat g_{n,{k^*}}^2(\hat{\beta}_{n, k^*}) \\
 -
\frac{n}{2}\hat g(\hat{\beta}_{n, k^*})' H\Sigma^{-1}H'\hat g(\hat{\beta}_{n, k^*})
 + R_{n, {k^*}}+ o_p(1).
\end{multline*}
This time, however, we have
\[
\frac{{k^*}}{2}\hat g_{k^*}^1(\hat{\beta}_{n, k^*})' \Omega^{-1}\hat g_{k^*}^1(\hat{\beta}_{n, k^*})
+ \frac{n-{k^*}}{2}\hat g_{n,{k^*}}^2(\hat{\beta}_{n, k^*})' \Omega^{-1}\hat g_{n,{k^*}}^2(\hat{\beta}_{n, k^*})
\to \infty,
\]
since $\hat g_{k^*}^1(\hat{\beta}_{n, k^*})' \Omega^{-1}\hat g_{k^*}^1(\hat{\beta}_{n, k^*})
+ \hat g_{n,{k^*}}^2(\hat{\beta}_{n, k^*})' \Omega^{-1}\hat g_{n,{k^*}}^2(\hat{\beta}_{n, k^*}) > 0$
for any sufficiently large   $n$.
This completes the proof of Theorem \ref{thm:3}.
\end{proof}

\bigskip
\bigskip
\noindent
{\bf Acknowledgements.}
The authors would like to thank Martina Stein who typed this manuscript with considerable technical expertise.
The work of  authors was supported by
JSPS Grant-in-Aid for Young Scientists (B) (16K16022), Waseda University Grant for Special Research Projects (2016S-063) and
the Deutsche  Forschungsgemeinschaft (SFB 823: Statistik nichtlinearer dynamischer Prozesse, Teilprojekt A1 and C1).

\bibliographystyle{chicago}
\setlength{\bibsep}{1pt}
\begin{small}
\bibliography{ref}
\end{small}
\end{document}